\DeclareMathOperator{\PSupp}{PSupp}
\setlist[enumerate]{label={(\roman*)}}
\theoremstyle{plain}
\newtheorem{theorem}{Theorem}
\newtheorem{corollary}[theorem]{Corollary}
\newtheorem{lemma}[theorem]{Lemma}
\newtheorem{proposition}[theorem]{Proposition}
\theoremstyle{definition}
\newtheorem{definition}[theorem]{Definition}
\newtheorem{example}[theorem]{Example}
\theoremstyle{remark}
\newtheorem*{remark}{Remark}
\numberwithin{theorem}{section} 
\newcommand{\Spec}[1]{\operatorname{Spec}(#1)}
\newcommand{\mSpec}[1]{\operatorname{mSpec}(#1)}
\newcommand{\rad}[1]{\operatorname{rad}(#1)}
\newcommand{\card}{\mbox{\rm{card\,}}}
\newcommand{\Ass}[2]{\mbox{\rm{Ass}}_ {#1}(#2)}
\newcommand{\Hom}[3]{\operatorname{Hom}_{#1}(#2,#3)}
\newcommand{\Ext}[4]{\operatorname{Ext}^{#1}_{#2}(#3,#4)}
\newcommand{\rmod}[1]{\mbox{\rm{Mod}--}{#1}}
\newcommand{\VF}{\mathcal{VF}}
\newcommand{\CA}{\mathcal{CA}}
\newcommand{\LV}{\mathcal{LV}}
\begin{document}

\title{Very flat, locally very flat, and contraadjusted modules}
\author{\textsc{Alexander Sl\' avik and Jan Trlifaj}}
\address{Charles University, Faculty of Mathematics and Physics, Department of Algebra \\
Sokolovsk\'{a} 83, 186 75 Prague 8, Czech Republic}
\email{Slavik.Alexander@seznam.cz}
\email{trlifaj@karlin.mff.cuni.cz}

\date{\today}
\subjclass[2010]{Primary: 13C11. Secondary: 14F05, 16D70, 13E05, 13G05.}
\keywords{Approximations of modules, contraherent cosheaf, (locally) very flat module, contraadjusted module, noetherian domain.}
\thanks{Research supported by GA\v CR 14-15479S and GAUK 571413}
\begin{abstract} Very flat and contradjusted modules naturally arise in algebraic geometry in the study of contraherent cosheaves over schemes. Here, we investigate the structure and approximation properties of these modules over commutative noetherian rings. Using an analogy between projective and flat Mittag-Leffler modules on one hand, and very flat and locally very flat modules on the other, we prove that each of the following statements are equivalent to the finiteness of the Zariski spectrum $\Spec R$ of a noetherian domain $R$: (i)~the class of all very flat modules is covering, (ii)~the class of all locally very flat modules is precovering, and (iii)~the class of all contraadjusted modules is enveloping. We also prove an analog of Pontryagin's Criterion for locally very flat modules over Dedekind domains.
\end{abstract}

\maketitle

\section*{Introduction} 

Very flat and contraadjusted modules have recently been introduced by Positselski \cite{P} in order to study instances of the comodule-contramodule correspondence for quasi-coherent sheaves and contraherent cosheaves over schemes. 

Recall \cite{EE} that given a scheme $X$ with the structure sheaf $\mathcal{O}_X$, a quasi-coherent sheaf $Q$ on $X$ can be viewed as a representation assigning 
\begin{itemize}
\item to every affine open subscheme $U \subseteq X$, an $\mathcal{O}_X(U)$-module $Q(U)$ of sections, and 
\item to each pair of embedded affine open subschemes $V \subseteq U \subseteq X$, 
an $\mathcal{O}_X(U)$-homomorphism $f_{UV}: Q(U) \to Q(V)$ such that

$$\mbox{id}_{\mathcal{O}_X(V)} \otimes f_{UV} : \mathcal{O}_X(V) \otimes_{\mathcal{O}_X(U)} Q(U) \to \mathcal{O}_X(V) \otimes_{\mathcal{O}_X(U)} Q(V) \cong Q(V)$$

is an $\mathcal{O}_X(V)$-isomorphism, and $f_{UV} f_{VW} = f_{UW}$ for $W \subseteq V \subseteq U \subseteq X$. 
\end{itemize} 

This kind of representation makes it possible to transfer various module theoretic notions to quasi-coherent sheaves on $X$. For example, (infinite-dimensional) vector bundles correspond thus to those representations where each $\mathcal{O}_X(U)$-module $Q(U)$ is (infinitely generated) projective. Notice that the functors $\mathcal{O}_X(V) \otimes_{\mathcal{O}_X(U)} -$ are exact, that is, all the $\mathcal{O}_X(U)$-modules $\mathcal{O}_X(V)$ are flat. 

Not all affine open subschemes are needed for the representation above: a set of them, $\mathcal S$, covering both $X$, and all $U \cap V$ where $U, V \in \mathcal S$, will do. The set $\mathcal S$ can often be small, making the representation above more efficient. However, when transferring module theoretic notions to quasi-coherent sheaves in this way, one needs to prove independence from the representation (i.e., from the choice of the open affine covering $\mathcal S$ of $X$). This is a non-trivial task even for the notion of a vector bundle, cf.\ \cite{RG}. 

Modern approach to cohomology theory of quasi-coherent sheaves on a scheme $X$ is based on the study of their unbounded derived category. By the classic work of Quillen, this reduces to studying model category structures on the category of unbounded chain complexes of quasi-coherent sheaves. Hovey's work \cite{H} has shown that the latter task reduces further to studying complete cotorsion pairs in the category of (complexes of) quasi-coherent sheaves. So eventually, one is faced with problems concerning approximations (precovers and preenvelopes) of modules. 

While it is obvious that projective modules form a precovering class, and flat modules are known to form a covering class for more than a decade \cite{BEE}, the surprising fact that flat Mittag-Leffler modules over non-perfect rings do not form a precovering class is quite recent, see \cite{AST}.    
  
\medskip
In \cite{P}, a dual representation was used to define \emph{contraherent cosheaves} $P$ on $X$ as the representations assigning 
\begin{itemize}
\item to every affine open subscheme $U \subseteq X$, of an $\mathcal{O}_X(U)$-module $P(U)$ of cosections, and 
\item to each pair of embedded affine open subschemes $V \subseteq U \subseteq X$,  
an $\mathcal{O}_X(U)$-homomorphism $g_{VU}: P(V) \to P(U)$ such that
$$\mbox{Hom}_{\mathcal{O}_X(U)}(\mathcal{O}_X(V),g_{VU}) : P(V) \to \mbox{Hom}_{\mathcal{O}_X(U)}(\mathcal{O}_X(V),P(U))$$
is an $\mathcal{O}_X(V)$-isomorphism, and $g_{WV} g_{VU} = g_{WU}$ for $W \subseteq V \subseteq U \subseteq X$. 
\end{itemize} 

Since the $\mathcal{O} _X(U)$-module $\mathcal{O} _X(V)$ is flat, but not projective in general, the Hom-functor above need not be exact. Its exactness is forced by imposing the following additional condition on the contraherent cosheaf $P$: 
\begin{itemize}
\item $\mbox{Ext}^1_{\mathcal{O}_X(U)}(\mathcal{O}_X(V),P(U)) = 0.$
\end{itemize}

In \cite{P}, a hitherto unnoticed additional property of the $\mathcal{O} _X(U)$-modules $\mathcal{O} _X(V)$ has been discovered: these modules are \emph{very flat} in the sense of Definition \ref{veryf} below. Indeed, by \cite[1.2.4]{P}, if $R \to S$ is a homomorphism of commutative rings such that the induced morphism of affine schemes $\mbox{Spec}(S) \to \mbox{Spec}(R)$ is an open embedding, then $S$ is a very flat $R$-module.
It follows that for each contraherent cosheaf $P$, the $\mathcal{O}_X(U)$-module $P(U)$ is \emph{contraadjusted} (again, see  Definition \ref{veryf} below). Moreover, the notion of a very flat module is local for affine schemes \cite[1.2.6]{P}.  

\medskip
One can use the representations above and extend various module theoretic notions to contraherent cosheaves on $X$. However, one first needs to understand the algebraic part of the picture. This is our goal here: we study in more detail the structure of very flat, locally very flat, and contraadjusted modules over commutative rings, as well as their approximation properties. 

We pursue the analogy between projective and flat Mittag-Leffler modules on one hand, and very flat and locally very flat modules on the other, in order to trace non-existence of precovers to the latter setting. Our main results are proved in the case when $R$ is a noetherian domain: in Theorems \ref{VFcoverdomain} and \ref{char}, we show that the class of all very flat modules is covering, iff  the class of all locally very flat modules is precovering, iff the Zariski spectrum of $R$ is finite. Moreover, in Corollary \ref{caenveloping}, we show that this is further equivalent to the class of all contraadjusted modules being enveloping. In the particular setting of Dedekind domains, we provide in Theorem \ref{variants} a characterization of locally very flat modules analogous to Pontryagin's Criterion for $\aleph_1$-freeness 
(cf.\ \cite[Theorem IV.2.3]{EM}).

\section{Preliminaries}

In this paper, $R$ denotes a commutative ring, and $\rmod R$ the category of all ($R$-) modules. Let $M$ be a module. We will use the notation $M \trianglelefteq N$ to indicate that $M$ is an essential submodule in a module $N$, and $E(M)$ will denote the injective envelope of $M$. 

A major theme of the classic module theory consists in finding direct sum decompositions of modules, preferably into direct sums of small, or well-understood types of modules. More in general, one can aim at deconstructions of modules, that is, at expressing them as transfinite extensions rather than direct sums:     

\begin{definition}\label{filt} Let $\mathcal C$ be a class of modules. A module $M$ is said to be \emph{$\mathcal C$-filtered} (or a \emph{transfinite extension} of the modules in $\mathcal C$), provided that there exists an increasing chain $\mathcal M  = ( M_\alpha \mid \alpha \leq \sigma )$ of submodules of $M$ with the following properties: $M_0 = 0$, $M_\alpha = \bigcup_{\beta < \alpha} M_\beta$ for each limit ordinal $\alpha \leq \sigma$, $M_{\alpha +1}/M_\alpha \cong C_\alpha$ for some $C_\alpha \in \mathcal C$ for each $\alpha < \sigma$, and $M_\sigma = M$.  

The chain $\mathcal M$ is called a \emph{$\mathcal C$-filtration} of the module $M$ of length $\sigma$.    
\end{definition}

If a module possesses a $\mathcal C$-filtration, then there are other $\mathcal C$-filtrations at hand, and one can replace the original filtration by the one more appropriate to a particular problem. The abundance of $\mathcal C$-filtrations follows from the next result going back to Hill:

\begin{lemma}{\rm (\cite[Theorem 7.10]{GT})}\label{hill} 
Let $R$ be a ring, $M$ a module, $\kappa$ a regular infinite cardinal, and $\mathcal C$ a class of $< \kappa$--presented modules. Let $\mathcal M = (M_\alpha \mid \alpha \leq \sigma )$ be a $\mathcal C$-filtration of $M$.

Then there exists a family $\mathcal H$ consisting of submodules of $M$ such that 
\begin{itemize}
\item[\rm{(i)}] $\mathcal M \subseteq \mathcal H$,
\item[\rm{(ii)}] $\mathcal H$ forms a complete distributive sublattice of the complete modular lattice of all submodules of $M$, 
\item[\rm{(iii)}] $P/N$ is $\mathcal C$-filtered for all $N \subseteq P$ in $\mathcal H$, and 
\item[\rm{(iv)}] If $N \in \mathcal H$ and $S$ is a subset of $M$ of cardinality $< \kappa$, then there is $P \in \mathcal H$ such that $N \cup S \subseteq P$ and $P/N$ is $< \kappa$--presented.
\end{itemize}
\end{lemma}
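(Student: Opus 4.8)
The plan is to build $\mathcal H$ from subsets of the index set $\sigma$ of the filtration, the correspondence $S \mapsto M_S$ being set up so that it transports the lattice structure of $\mathcal P(\sigma)$ into the submodule lattice of $M$. First I would use $< \kappa$--presentedness of the $C_\alpha$'s to fix, for each $\alpha < \sigma$, a presentation $0 \to K_\alpha \to F_\alpha \xrightarrow{\pi_\alpha} C_\alpha \to 0$ with $F_\alpha$ free on a set $B_\alpha$ of cardinality $< \kappa$ and $K_\alpha$ generated by a set $G_\alpha$ of cardinality $< \kappa$. Lifting $\pi_\alpha$ along $M_{\alpha+1} \twoheadrightarrow M_{\alpha+1}/M_\alpha = C_\alpha$ gives a map $F_\alpha \to M_{\alpha+1} \subseteq M$, and summing these yields a homomorphism $f\colon F := \bigoplus_{\alpha < \sigma} F_\alpha \to M$ which is surjective by a straightforward induction along the filtration (limit stages being unions). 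For $k \in G_\alpha$ one has $f(k) \in M_\alpha$, so $f(k) = f(v_k)$ for some $v_k \in \bigoplus_{\beta < \alpha} F_\beta$; fix one such $v_k$ and let $\operatorname{supp}(v_k) \subseteq \alpha$ denote the finite set of indices at which it has a nonzero component. Then $h_{\alpha,k} := k - v_k \in \Ker{f}$.

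Call a set $S \subseteq \sigma$ \emph{closed} if $\operatorname{supp}(v_k) \subseteq S$ for all $\alpha \in S$ and all $k \in G_\alpha$, and for closed $S$ put $M_S := f\bigl(\bigoplus_{\alpha \in S} F_\alpha\bigr)$; then set $\mathcal H := \{ M_S \mid S \subseteq \sigma \text{ closed} \}$. The closed subsets of $\sigma$ visibly form a complete sublattice of $(\mathcal P(\sigma), \cup, \cap)$. The structural heart of the proof is the claim that $\Ker{f}$ is generated by $\{ h_{\alpha,k} \mid \alpha < \sigma,\ k \in G_\alpha \}$; this is proved by transfinite induction along the filtration, and it is exactly here that $< \kappa$--presentedness (rather than mere $< \kappa$--generatedness) is used. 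From it one extracts that $S \mapsto M_S$ sends unions of closed sets to sums (this much is immediate) and, crucially, intersections of closed sets to intersections of submodules, the inclusion $M_{S \cap T} \supseteq M_S \cap M_T$ being the substantial point. In particular $\mathcal H$ is closed under arbitrary sums and intersections computed in the submodule lattice, and distributivity of $\mathcal H$ is inherited from that of $(\mathcal P(\sigma), \cup, \cap)$.

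It remains to verify (i)--(iv). For (i): the initial segment $S = \{\beta \mid \beta < \alpha\}$ is closed with $M_S = M_\alpha$ (again by the surjectivity induction), and $\sigma$ itself is closed with $M_\sigma = M$. Property (ii) was established above. For (iii): if $M_S \subseteq M_T$ with $S, T$ closed, then $M_T = M_{S \cup T}$, so we may assume $S \subseteq T$; enumerating $T \setminus S = \{\alpha_\xi \mid \xi < \lambda\}$ increasingly, each $T_\xi := S \cup \{\alpha_\eta \mid \eta < \xi\}$ is again closed, the chain $(M_{T_\xi})_{\xi \le \lambda}$ is continuous and runs from $M_S$ to $M_T$, and $M_{T_{\xi+1}}/M_{T_\xi} \cong C_{\alpha_\xi}$ — the last isomorphism using the generation claim once more, to see that quotienting by $M_{T_\xi}$ kills precisely $K_{\alpha_\xi}$ inside $F_{\alpha_\xi}$ and creates no further relations. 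For (iv): given a closed $S$ and $W \subseteq M$ with $|W| < \kappa$, choose preimages $\tilde w \in F$ of the elements of $W$, let $T_0 := S \cup \bigcup_{w \in W} \operatorname{supp}(\tilde w)$, and iterate the closure operation; since each $G_\alpha$ has cardinality $< \kappa$ and the ordinals are well-founded, the associated descent tree is $< \kappa$--branching with only finite branches, so by regularity of $\kappa$ the resulting least closed set $T \supseteq S$ satisfies $|T \setminus S| < \kappa$. Then $M_T \in \mathcal H$ contains $N \cup W$, and by (iii) the module $M_T/M_S$ is $\mathcal C$-filtered by a filtration of length $< \kappa$ with $< \kappa$--presented factors, hence is itself $< \kappa$--presented.

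The main obstacle is the generation claim for $\Ker{f}$, together with the two places where its consequences are invoked (the intersection identity in (ii) and the isomorphism $M_{T_{\xi+1}}/M_{T_\xi} \cong C_{\alpha_\xi}$ in (iii)): one must check, by a careful transfinite induction with coherent bookkeeping of the supports $\operatorname{supp}(v_k)$ across limit stages, that the \emph{stratified} relations $h_{\alpha,k}$ already generate all of $\Ker{f}$, so that closed subsets of $\sigma$ correspond exactly to sub-filtrations of $M$. Once that is in hand, the remaining lattice identities, the continuity of the chains in (iii), and the termination of the closure process in (iv) are routine bookkeeping.
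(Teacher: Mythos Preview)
The paper does not give its own proof of this lemma: it is quoted verbatim as \cite[Theorem~7.10]{GT} and used as a black box. Your outline is essentially the standard proof found there (the ``Hill family'' built from closed subsets of the index set of the filtration via fixed $<\kappa$--presentations of the factors), so there is nothing to compare against within this paper. The strategy you sketch---fix presentations, build $f\colon\bigoplus_\alpha F_\alpha\to M$, define closed $S\subseteq\sigma$ via supports of the lifted relations, and set $\mathcal H=\{M_S\mid S\text{ closed}\}$---is exactly the one in \cite{GT}, and you have correctly identified the generation claim for $\Ker f$ as the technical crux behind both the intersection identity and the isomorphism $M_{T_{\xi+1}}/M_{T_\xi}\cong C_{\alpha_\xi}$.

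One small remark on your argument for (iv): the phrase ``the associated descent tree is $<\kappa$--branching with only finite branches, so by regularity of $\kappa$\dots'' deserves a word of care. For $\kappa=\omega$ this is K\"onig's lemma (finitely branching plus no infinite branch implies finite), while for uncountable regular $\kappa$ one instead uses that the $\omega$--iteration $T_{n+1}=T_n\cup\bigcup_{\alpha\in T_n,\,k\in G_\alpha}\operatorname{supp}(v_k)$ stabilises in size below $\kappa$ because $\omega<\kappa$ and each step adds $<\kappa$ elements. Both cases are routine, but they use regularity in slightly different ways; your compressed justification covers them, though it would be clearer to separate the two.
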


$\mathcal C$-filtrations are closely related to approximations (precovers and preenvelopes) of modules:

\begin{definition} 
\begin{itemize}
\item[\rm{(i)}] A class of modules $\mathcal A$ is \emph{precovering} 
if for each module $M$ there is $f \in \mbox{Hom}_R(A,M)$ with $A \in \mathcal A$ such that
each $f^\prime \in \mbox{Hom}_R(A^{\prime},M)$ with $A^\prime \in \mathcal A$ has a factorization through 
$f$: 
\[
\xymatrix{A \ar[r]^{f} & M \\ 
{A^\prime} \ar@{-->}[u]^{g} \ar[ur]_{f^\prime} &}
\]
The map $f$ is called an \emph{$\mathcal A$-precover} of $M$ (or a \emph{right $\mathcal A$-approximation} of $M$).
\item[\rm{(ii)}] An $\mathcal A$-precover is \emph{special} in case it is surjective, and its kernel $K$ satisfies 
$\mbox{Ext}_R^1(A,K) = 0$ for each $A \in \mathcal A$. 
\item[\rm{(iii)}] Let $\mathcal A$ be precovering. Assume that in the setting of (i), if $f^\prime = f$ then each factorization $g$ is an automorphism. Then $f$ is an \emph{$\mathcal A$-cover} of $M$. $\mathcal A$ is called a \emph{covering} class in case each module has an $\mathcal A$-cover. We note that each covering class containing the projective modules and closed under extensions is necessarily special precovering (Wakamatsu Lemma). 

For example, the class of all projective modules is easily seen to be precovering, while the class of all flat modules is covering (by the Flat Cover Conjecture proved in \cite{BEE}). By a classic result of Bass, the class of all projective modules is covering, iff it coincides with the class of all flat modules, i.e., iff $R$ is a right perfect ring. 

Dually, we define \emph{(special) preenveloping} and \emph{enveloping} classes of modules. For example, the class of all injective modules is an enveloping class. 
\end{itemize}   
\end{definition} 

Cotorsion pairs are a major source of approximations. Moreover, by a classic result of Salce, they provide for an explicit duality between special precovering and special preenveloping classes of modules:

\begin{definition}\label{Salce}
A pair of classes of modules $\mathfrak C = (\mathcal A, \mathcal B)$ is a \emph{cotorsion pair} provided that  
\begin{enumerate}
\item $\mathcal A = {}^\perp \mathcal B := \{ A \in \mbox{Mod-}R \mid \mbox{Ext}^1_R(A,B) = 0 \mbox{ for all } B \in \mathcal B \}$, and
\item $\mathcal B = \mathcal A ^\perp := \{ B \in \mbox{Mod-}R \mid \mbox{Ext}^1_R(A,B) = 0 \mbox{ for all } A \in \mathcal A \}$. 
\end{enumerate}
If moreover \emph{$3.$ For each module $M$, there exists an exact sequences $0 \to B \to A \to M \to 0$ with $A \in \mathcal A$ and $B \in \mathcal B$}, then $\mathfrak C$ is called \emph{complete}.
\end{definition}

Condition $3.$ implies that $\mathcal A$ is a special precovering class. In fact, $3.$ is equivalent to its dual:
\emph{$3^\prime.$ For each module $M$ there is an exact sequence $0 \to M \to B \to A \to 0$ with $A \in \mathcal A$ and $B \in \mathcal B$}, which in turn implies that $\mathcal B$ is a special preenveloping class.

Module approximations are abundant because of the following basic facts (for their proofs, see e.g.\ \cite{GT}):

\begin{theorem}\label{approx-main} Let $\mathcal S$ be a set of modules.
\begin{enumerate}
\item Let $\mathcal C$ denote the class of all $\mathcal S$-filtered modules. 
Then $\mathcal C$ is precovering. 
Moreover, if $\mathcal C$ is closed under direct limits, then $\mathcal C$ is covering.
\item The cotorsion pair $(^\perp (\mathcal S ^\perp), \mathcal S ^\perp)$ is complete (this is the cotorsion pair \emph{generated} by the set $\mathcal S$).

Moreover, if $R \in \mathcal S$, then the special precovering class $\mathcal A := {}^\perp (\mathcal S ^\perp)$ coincides with the class of all direct summands of $\mathcal S$-filtered modules. If $\kappa$ is a regular uncountable cardinal such that each module in $\mathcal S$ is $< \kappa$-presented, and $\mathcal C$ denotes the class of all $< \kappa$-presented modules from $\mathcal A$, then $\mathcal A$ also coincides with the class of all $\mathcal C$-filtered modules. 
\end{enumerate}
\end{theorem}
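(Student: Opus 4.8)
The plan is to derive both parts from three standard ingredients: Eklof's Lemma (an $\mathcal X$-filtered module lies in ${}^\perp(\mathcal X^\perp)$ for any class $\mathcal X$ of modules), a transfinite pushout-and-union construction in the spirit of Eklof and Trlifaj, and the Hill Lemma \ref{hill}, which mediates between filtrations and $<\kappa$-small subobjects.

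\emph{Part $(1)$.} The class $\mathcal C$ of all $\mathcal S$-filtered modules is closed under arbitrary direct sums and under transfinite extensions (concatenate the filtrations). To see that $\mathcal C$ is precovering, fix a module $M$ and an infinite cardinal $\lambda$ with $\card M\le\lambda$ and $\card C\le\lambda$ for every $C\in\mathcal S$. Build by transfinite recursion a continuous increasing chain $(A_\alpha\mid\alpha\le\tau)$ of modules in $\mathcal C$ together with compatible maps $f_\alpha\colon A_\alpha\to M$, starting from $A_0=0$, with each $A_{\alpha+1}/A_\alpha$ a direct sum of modules from $\mathcal S$, arranged so that at every successor step each recorded ``partial factorization'' of a map $D\to M$ with $D\in\mathcal C$ and $\card D\le\lambda$ is extended one further step. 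A bookkeeping argument bounds $\tau$ by a cardinal depending only on $\lambda$, and then $A:=A_\tau\in\mathcal C$ together with $f:=f_\tau$ is a $\mathcal C$-precover of $M$: an arbitrary $h\colon D\to M$ with $D\in\mathcal C$ is dealt with by writing $D$ as the union of a continuous chain of $\le\lambda$-presented submodules $D'$ with $D'\in\mathcal C$ and $D/D'\in\mathcal C$ --- such a chain exists by applying the Hill Lemma \ref{hill} to a filtration of $D$ --- and lifting successively along it. For the ``moreover'': a precovering class closed under direct limits is covering (Enochs), so if $\mathcal C$ is closed under direct limits it is covering.

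\emph{Part $(2)$.} Write $\mathcal B:=\mathcal S^\perp$ and $\mathcal A:={}^\perp\mathcal B$. By Eklof's Lemma every $(\mathcal S\cup\{R\})$-filtered module lies in $\mathcal A$, as $R$ is projective. For completeness it suffices, by the equivalence of conditions $3.$ and $3'.$ of Definition \ref{Salce} (Salce's Lemma), to produce for each $M$ an exact sequence $0\to M\to B\to F\to 0$ with $B\in\mathcal B$ and $F$ an $\mathcal S$-filtered module. One constructs $B$ by iterating the step ``replace the current module $M_\alpha$ by the pushout that simultaneously realizes every element of $\Ext{1}{R}{C}{M_\alpha}$ for every $C\in\mathcal S$'' and taking unions at limit stages, stopping after enough steps; a ``catch-your-tail'' argument then shows $\Ext{1}{R}{C}{B}=0$ for all $C\in\mathcal S$, that is, $B\in\mathcal B$, while $F=B/M$ is $\mathcal S$-filtered by construction. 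Assume now $R\in\mathcal S$. The short exact sequence $0\to B'\to A'\to N\to 0$ furnished by condition $3.$, with $A'$ an $\mathcal S$-filtered module and $B'\in\mathcal B$, splits whenever $N\in\mathcal A$, so $N$ is a direct summand of an $\mathcal S$-filtered module; conversely every such summand lies in $\mathcal A$, since $\mathcal A$ is closed under direct summands and contains all $\mathcal S$-filtered modules. Finally, let $\mathcal C$ denote the class of $<\kappa$-presented modules in $\mathcal A$. That a $\mathcal C$-filtered module lies in $\mathcal A$ is Eklof's Lemma once more; for the converse, given $N\in\mathcal A$ write $A'=N\oplus N'$ with $A'$ an $\mathcal S$-filtered module, apply the Hill Lemma \ref{hill} to its $\mathcal S$-filtration to obtain the complete distributive sublattice $\mathcal H$, and extract a continuous well-ordered chain inside $\mathcal H$ all of whose members are compatible with the decomposition $A'=N\oplus N'$; intersecting this chain with $N$ yields a filtration of $N$ whose consecutive factors are $<\kappa$-presented direct summands of $\mathcal S$-filtered modules (by properties (iii) and (iv) of Lemma \ref{hill}), hence lie in $\mathcal A$, hence in $\mathcal C$.

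\emph{Expected main obstacle.} The crux is the very last step: arranging the chain extracted from $\mathcal H$ to be compatible with the decomposition $A'=N\oplus N'$ requires a back-and-forth ``closing off'' argument --- repeatedly absorbing the images of the current member of $\mathcal H$ under both projections onto $N$ and $N'$, using property (iv) of the Hill Lemma --- after which one must still check that the resulting small quotients remain $<\kappa$-presented and stay inside $\mathcal A$. The ``catch-your-tail'' verifications in Parts $(1)$ and $(2)$ are routine in outline but need the cardinal bounds to be fixed carefully in advance.
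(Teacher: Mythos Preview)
The paper does not prove Theorem~\ref{approx-main} at all: it is stated as a collection of ``basic facts'' with the parenthetical ``for their proofs, see e.g.\ \cite{GT}''. So there is no argument in the paper to compare against; the theorem is quoted from the literature.

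Your sketch is broadly in line with the standard proofs one finds in \cite{GT}. Part~(2) is essentially the textbook route: Eklof's Lemma, the Eklof--Trlifaj small object argument for completeness, the splitting trick for the direct-summand description of $\mathcal A$, and the Hill Lemma with a back-and-forth closing-off for the $\mathcal C$-filtration statement. The obstacle you flag (making the extracted chain compatible with the direct-sum decomposition $A' = N \oplus N'$) is exactly where the work lies, and your description of how to handle it is correct.

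Part~(1) is where your outline is thinnest. The transfinite ``bookkeeping'' construction you describe can be made to work, but as written it is vague about what exactly is enumerated and why the process terminates in a genuine precover. The cleaner argument in \cite{GT} is more direct: since $\mathcal S$ is a set, there is a cardinal $\lambda$ bounding the presentations of all modules in $\mathcal S$; by the Hill Lemma every $\mathcal S$-filtered module is a directed union of $\le\lambda$-presented $\mathcal S$-filtered submodules, so there is only a \emph{set} $\mathcal S'$ of isomorphism classes of such modules. Then for a given $M$ the map $\bigoplus_{D\in\mathcal S'} D^{(\Hom_R(D,M))} \to M$ is already a $\mathcal C$-precover, since any $h\colon D'\to M$ with $D'\in\mathcal C$ lifts along the chain of small pieces. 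This avoids the transfinite bookkeeping entirely. Your approach is not wrong, but it is doing more work than necessary.
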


For example, if $\mathcal S = \{ R \}$, then $\mathcal A$ is the class of all projective modules, and (2) gives that each projective module is a direct summand of a free one, and (for $\kappa = \aleph_1$) that each projective module is a direct sum of countably generated modules (Kaplansky Theorem). 

Relations between projective and flat Mittag-Leffler modules are the source of another generalization:

\begin{definition}\label{locally} A system $\mathcal S$ consisting of countably presented submodules of a module $M$ is a \emph{dense system} provided that $\mathcal S$ is closed under unions of well-ordered countable ascending chains, and each countable subset of $M$ is contained in some $N \in \mathcal S$.

Let $\mathcal C$ be a set of countably presented modules. Denote by $\mathcal A$ the class of all modules possessing a countable $\mathcal C$-filtration. A module $M$ is \emph{locally $\mathcal C$-free} provided that $M$ contains a dense system of submodules consisting of modules from $\mathcal A$. 
(Notice that if $M$ is countably presented, then $M$ is locally $\mathcal C$-free, iff $M \in \mathcal A$.) 
\end{definition}

For example, if $\mathcal C$ is a representative set of the class of all countably generated projective modules, then locally $\mathcal C$-free modules coincide with the flat Mittag-Leffler modules. The surprising fact that this class is not precovering in case $R$ is not a perfect ring has recently been proved by \v Saroch in \cite{AST}. The key obstruction for existence of flat Mittag-Leffler approximations are the Bass modules: 

\begin{definition}\label{Bassm}  Let $\mathcal C$ be a set of countably presented modules. A module $B$ is a \emph{Bass module} over $\mathcal C$ provided that $B$ is a countable direct limit of some modules from $\mathcal C$.

W.l.o.g., such $B$ is the direct limit of a chain 
$$C_0 \overset{f_0}\to C_1 \overset{f_1}\to \dots \overset{f_{i-1}}\to C_i \overset{f_i}\to C_{i+1} \overset{f_{i+1}}\to \dots$$
with $C_i \in \mathcal C$ and $f_i \in \Hom R{C_i}{C_{i+1}}$ for all $i < \omega$.
\end{definition}

\begin{example}\label{nperf} If $\mathcal C$ denotes the representative set of all finitely generated projective modules, then the Bass modules over $\mathcal C$ coincide with the countably presented flat modules. 
If $R$ is not right perfect, then a classic instance of such a Bass module $B$ arises when $C_i = R$ and $f_i$ is the left multiplication by $a_i$ ($i < \omega$), where 
$Ra_0 \supsetneq \dots \supsetneq Ra_n\dots a_0 \supsetneq Ra_{n+1}a_n\dots a_0 \supsetneq \dots$ is strictly decreasing chain of principal left ideals in $R$.
\end{example}

\begin{lemma}{\rm (\cite[Lemma 3.2]{AST})}\label{saroch}  Let $\mathcal C$ be a class of countably presented modules, 
and $\mathcal A$ the class of all locally $\mathcal C$-free modules. Assume there exists 
a Bass module $B$ over $\mathcal C$ such that $B$ is not a direct summand in a module from $\mathcal A$.  
Then $B$ has no $\mathcal A$-precover.
\end{lemma}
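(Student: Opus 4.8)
Since the statement is quoted from \cite[Lemma 3.2]{AST}, in the paper it suffices to cite it there; here is how I would prove it. The plan is to argue by contradiction: assuming $f\colon A\to B$ is an $\mathcal A$-precover, I will exhibit a module from $\mathcal A$ that has $B$ as a direct summand, contradicting the hypothesis. As $B$ is a Bass module over $\mathcal C$, fix a presentation $B=\varinjlim_{i<\omega}C_i$ along homomorphisms $f_i\colon C_i\to C_{i+1}$ with $C_i\in\mathcal C$, and let $g_i\colon C_i\to B$ be the colimit maps, so $B=\bigcup_{i<\omega}\operatorname{Im} g_i$. Each $C_i$ is countably presented and carries the trivial $\mathcal C$-filtration $0\subseteq C_i$, hence is locally $\mathcal C$-free, i.e.\ $C_i\in\mathcal A$; thus by the precover property $g_i=fh_i$ for some $h_i\colon C_i\to A$, and in particular $f$ is surjective.

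The first observation is that it suffices to lift the $g_i$ \emph{coherently}: if there exist $t_i\colon C_i\to A$ with $ft_i=g_i$ and $t_i=t_{i+1}f_i$ for all $i<\omega$, then the $t_i$ induce $t\colon B\to A$ with $tg_i=t_i$, and since $(ft)g_i=g_i$ for every $i$ and the $g_i$ are jointly epic we get $ft=\mathrm{id}_B$; hence $B$ is a direct summand of $A\in\mathcal A$. The obstruction to replacing the $h_i$ by such a coherent family is measured by the $\omega$-indexed tower of abelian groups $\bigl(\operatorname{Hom}_R(C_i,K)\bigr)_{i<\omega}$, where $K=\ker f$: from the short exact sequence of towers
\[
0\longrightarrow\bigl(\operatorname{Hom}_R(C_i,K)\bigr)_i\longrightarrow\bigl(\operatorname{Hom}_R(C_i,A)\bigr)_i\longrightarrow\bigl(\operatorname{Im}(\operatorname{Hom}_R(C_i,A)\to\operatorname{Hom}_R(C_i,B))\bigr)_i\longrightarrow0
\]
and the fact that the coherent family $(g_i)_i$ lies in the inverse limit of the right-hand tower, one reads off that a coherent lift exists provided the image of $(g_i)_i$ in $\varprojlim^1_i\operatorname{Hom}_R(C_i,K)$ vanishes; this certainly happens if the tower $\bigl(\operatorname{Hom}_R(C_i,K)\bigr)_{i<\omega}$ satisfies the Mittag-Leffler condition.

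So the whole matter reduces to securing that Mittag-Leffler condition, which is where the structure of $\mathcal A$ has to enter (for an arbitrary locally $\mathcal C$-free $A$ it may well fail); the idea is to shrink $A$ to a suitable countably presented submodule. Since $A$ is locally $\mathcal C$-free it contains a dense system $\mathcal S$ of countably presented submodules, each with a countable $\mathcal C$-filtration. Using that $\bigcup_{i<\omega}\operatorname{Im} h_i$ is countably generated, that $\mathcal S$ is closed under countable ascending unions, and Lemma \ref{hill} to keep the filtrations under control, I would construct $N\in\mathcal S$ with $\bigcup_{i<\omega}\operatorname{Im} h_i\subseteq N$ for which the tower $\bigl(\operatorname{Hom}_R(C_i,\ker(f|_N))\bigr)_{i<\omega}$ is Mittag-Leffler; carrying out this construction — essentially an instance of the Mittag-Leffler analysis of \cite{AST} — is the step I expect to be the main obstacle, the remainder being routine bookkeeping. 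Granting it, $f|_N\colon N\to B$ is surjective ($\operatorname{Im}(f|_N)\supseteq\sum_i\operatorname{Im} g_i=B$) with $N\in\mathcal A$, the $h_i$ factor through $N$, and the second paragraph applied to $f|_N$ yields a section $B\to N$ of $f|_N$. Thus $B$ is a direct summand of the module $N\in\mathcal A$, contradicting the hypothesis, and therefore $B$ has no $\mathcal A$-precover.
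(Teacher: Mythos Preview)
The paper does not prove this lemma; it is quoted from \cite[Lemma~3.2]{AST} without argument, so there is no in-paper proof to compare your attempt against. Your outline --- lift the colimit maps $g_i$ through the precover, pass to a countably $\mathcal C$-filtered $N\in\mathcal S$ containing the images, and split $f|_N$ via a coherent lift --- is the natural strategy and matches the shape of the argument in the cited source.

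Two comments. The reference to Lemma~\ref{hill} is out of place: Hill's Lemma refines a fixed filtration into a large lattice of filtered submodules, whereas here the dense system $\mathcal S$ from Definition~\ref{locally} already provides all the countably $\mathcal C$-filtered submodules you need, with no further refinement required. More substantively, you correctly flag the construction of $N$ with the desired Mittag--Leffler property as the main obstacle, and you are right that this is where the real content lies; but as stated your proposal is not yet a proof, since you give no mechanism for forcing the tower $\bigl(\operatorname{Hom}_R(C_i,\ker(f|_N))\bigr)_{i<\omega}$ to be Mittag--Leffler, and it is not clear that enlarging $N$ within $\mathcal S$ can always achieve this. That is precisely the gap the machinery of \cite{AST} is built to fill, so deferring to it (as the paper does) is reasonable; the remainder of your argument --- surjectivity of $f$, the $\varprojlim^1$ obstruction, and the final splitting --- is correct.
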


Note that in the setting of Example \ref{nperf}, Lemma \ref{saroch} yields that for each non-right perfect ring, the classic Bass module $B$ does not have a flat Mittag-Leffler precover. For further applications combining Lemma \ref{saroch} with (infinite dimensonal) tilting theory, we refer to \cite{AST}; our applications here will go in a different direction (see Lemma \ref{nprec} below).   

\medskip
We will also need the notion of the rank of a torsion-free module: recall that a module $M$ is \emph{torsion-free} provided that no non-zero element of $M$ is annihilated by any regular element (= non-zero-divisor) of $R$. 

First, we consider a classic particular case, when $R$ is a domain. We will denote by $Q$ the quotient field of $R$. For a torsion-free module $M$, $r(M)$ will denote its \emph{rank} defined by $r(M) = \dim_Q(M \otimes_R Q)$. Notice that $\kappa = r(M)$, iff $M$ is isomorphic to a module $M^\prime$ such that $R^{(\kappa)} \trianglelefteq M^\prime \trianglelefteq Q^{(\kappa)}$. 

Also, for each $0 \neq r \in R$, the localization $R[r^{-1}]$ coincides with the subring of $Q$ containing $R$ and consisting of (equivalence classes of) the fractions whose denominators are powers of $r$. In particular, $R[r^{-1}] \otimes_R R[s^{-1}] \cong R[s^{-1}]$ in case $r$ divides $s$. 

In Section \ref{sect1}, we will work in the more general setting of (commutative) rings whose prime radical $N = \rad R$ is nilpotent, and 
$\bar R = R/N$ is a Goldie ring, i.e., $\bar R$ has a semisimple classical quotient ring $\bar Q$, cf.\ \cite[Theorem 6.15]{GW}. In this setting, we will employ the notion of a reduced rank from \cite[p.194, Exercise 11G]{GW}: Let $n$ denote the nilpotency index of $N$. For a module $M$, we consider the chain $0 = MN^n \subseteq MN^{n-1} \subseteq \dots \subseteq MN \subseteq MN^0 = M$. Then $MN^{i}/MN^{i+1}$ is a $\bar R$-module for each $i < n$. The (reduced) \emph{rank} of $M$ is defined by $r(M) = \sum_{i < n} \ell(MN^{i}/MN^{i+1})$, where for a $\bar R$-module $P$, $\ell(P)$ denotes the composition length of the $\bar Q$-module $P \otimes_R \bar Q$. 

Note that this more general setting also includes the important particular case when $R$ is noetherian. Moreover, for torsion-free modules over domains, the notions of a reduced rank and rank coincide, so our notation is consistent. 

By \cite[Exercise 11G(b)]{GW}, if $0 \to M \to M^\prime \to M^\prime/M \to 0$ is exact and $M^\prime$ has finite rank, then 
$r(M^\prime) = r(M) + r(M^\prime/M)$ (i.e., the reduced rank is additive on short exact sequences). Moreover, $r(M) = 0$, if and only if $M$ is $S$-torsion where $S$ is the set of all $s \in R$ such that $s + \rad R$ is regular in $R/\rad R$.     

By \cite[Exercises 11.H and 11.I]{GW}, the following more general version of Small's Theorem \cite[Theorem 11.9]{GW} holds true:

\begin{theorem}\label{small} Let $R$ be a ring. Then $R$ has a classical quotient ring which is artinian, if and only if $N = \rad R$ is nilpotent, $\bar R = R/N$ is a Goldie ring, $r(R)$ is finite, and for each $r \in R$, $r$ is regular in $R$ iff $r + N$ is regular in $\bar R$.
\end{theorem}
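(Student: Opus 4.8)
For the forward implication, suppose $Q$ is an artinian classical quotient ring of $R$. Then $J(Q)$, which for an artinian ring coincides with the prime radical, is nilpotent, and the first thing I would establish is $N = R \cap J(Q)$: the ideal $R \cap J(Q)$ of $R$ is nilpotent, hence contained in $N = \rad R$, while conversely it is an intersection of contractions to $R$ of primes of $Q$, so an intersection of prime ideals of $R$, hence contains $N$. Thus $N$ is nilpotent and $\bar R$ embeds into $\bar Q := Q/J(Q)$, which is semisimple artinian; moreover, since $Q$ is the \emph{two-sided} classical quotient ring and $N = R \cap J(Q)$, every element of $\bar Q$ is both a right and a left fraction $\bar a\,\bar s^{-1} = \bar t^{-1}\bar b$ with $\bar s,\bar t$ units of $\bar Q$ coming from regular elements of $R$. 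This makes condition $(4)$ a chain of equivalences: $r$ is regular in $R$ $\iff$ $r$ is a unit of $Q$ (definition of classical quotient ring) $\iff$ $\bar r$ is a unit of $\bar Q$ (units lift modulo the nilpotent ideal $J(Q)$) $\iff$ $\bar r$ is regular in $\bar Q$ ($\bar Q$ artinian) $\iff$ $\bar r$ is regular in $\bar R$, where for ``$\Leftarrow$'' one restricts a unit of $\bar Q$ to the subring $\bar R$, and for ``$\Rightarrow$'' one clears a denominator and uses left- and right-regularity of $\bar r$ in $\bar R$. It follows that the set $S$ of elements regular modulo $N$ equals the set of regular elements of $R$, whence $\bar Q$ is precisely the classical quotient ring of $\bar R$, so $\bar R$ is a Goldie ring; finally, additivity of the reduced rank on $0 \to R \to Q \to Q/R \to 0$ together with $r(Q/R) = 0$ --- which holds since $Q/R$ is $S$-torsion, as $qs = a \in R$ whenever $q = as^{-1}$ --- gives $r(R) = r(Q) < \infty$, the finiteness of $r(Q)$ being clear from $Q$ artinian.

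For the converse, assume $N$ nilpotent, $\bar R$ Goldie, $r(R)$ finite, and $(4)$. By Goldie's theorem $\bar R$ has a semisimple artinian classical quotient ring $\bar Q = \bar R\bar S^{-1}$, with $\bar S$ the set of regular elements of $\bar R$, and by $(4)$ the set $S$ of elements of $R$ regular modulo $N$ is exactly the set of regular elements of $R$. The core step is to show $S$ is a two-sided denominator set: for $c \in S$, left multiplication by $c$ is a right $R$-module isomorphism $R \cong cR$, so by additivity of the reduced rank (legitimate since $r(R) < \infty$) one gets $r(R/cR) = r(R) - r(cR) = 0$; hence $R/cR$ is $S$-torsion, i.e.\ for every $a \in R$ there is $t \in S$ with $at \in cR$, which is the right Ore condition for $(a,c)$. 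Right reversibility is automatic because elements of $S$ are regular, and the left-hand statements follow symmetrically, so $R S^{-1}$ exists; as $S$ is the set of all regular elements of $R$, it is the classical quotient ring of $R$. It remains to see $R S^{-1}$ is artinian: $J(R S^{-1}) = N S^{-1}$ is nilpotent since $N$ is, $R S^{-1}/N S^{-1} \cong \bar R\bar S^{-1} = \bar Q$ is semisimple artinian, and the induced chain $0 = R N^n S^{-1} \subseteq \dots \subseteq R N\, S^{-1} \subseteq R S^{-1}$ has successive quotients $(R N^i/R N^{i+1}) \otimes_R \bar Q$, whose lengths sum to $r(R) < \infty$; a ring with a finite filtration by finite-length modules is artinian, and we are done.

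The step I expect to be the main obstacle is the converse's claim that $S$ is a genuine two-sided denominator set, including the symmetric bookkeeping needed to know $R S^{-1}$ is a two-sided localization and to control its length: this is exactly where the quoted additivity and torsion characterization of the reduced rank do the real work, through the identity $r(R/cR) = 0$ for $c \in S$. In the forward direction the only non-formal point is the implication ``$\bar r$ regular in $\bar R \Rightarrow \bar r$ regular in $\bar Q$'', which hinges on $\bar Q$ being a two-sided Ore localization of $\bar R$; everything else there is radical-chasing inside the artinian ring $Q$.
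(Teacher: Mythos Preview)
The paper does not actually prove this theorem: it is stated with a reference to \cite[Exercises 11.H and 11.I]{GW} as a known generalization of Small's Theorem \cite[Theorem 11.9]{GW}, and no argument is given in the paper itself. So there is nothing to compare your approach \emph{against}; what can be said is whether your argument is correct and whether it matches the standard proof implicit in those exercises.

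Your proof is the standard one and is correct. The forward direction is routine radical-chasing once $N = R \cap J(Q)$ is established, and your chain of equivalences for condition~(4) is clean; the identification of $\bar Q$ as the classical quotient ring of $\bar R$ is exactly what is needed to conclude $\bar R$ is Goldie. For the converse, the decisive step --- deducing the Ore condition from $r(R/cR) = 0$ via additivity of the reduced rank --- is precisely the content of the Goodearl--Warfield exercises and of Small's original argument, and you have it right. The length computation for $RS^{-1}$ via the filtration by the $N^i S^{-1}$ is also standard and correct.

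One remark on context: since the paper works throughout over \emph{commutative} rings, several of your steps become trivial there --- the Ore condition is automatic, left/right symmetry disappears, and $N = R \cap J(Q)$ follows at once from $\rad(S^{-1}R) = S^{-1}\rad R$. Your treatment of the two-sided noncommutative case is more than the paper requires, but it does no harm and matches the generality of the cited source.
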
   

Finally, we note that in the more general setting $R[r^{-1}] \otimes_R R[s^{-1}] \cong R[(rs)^{-1}]$ for all $r, s \in R$, and  $R[r^{-1}] = 0$ iff $r \in \rad R$ (i.e., $r$ is nilpotent). 

\section{Very flat modules}\label{sect1} 

For each ring $R$, the class $\mathcal F$ of all flat modules fits in the complete cotorsion pair $(\mathcal F, \mathcal C)$, where $\mathcal C = \mathcal F ^\perp$ is the class of all \emph{cotorsion} modules. Very flat modules are also defined using complete cotorsion pairs:       

\begin{definition}\label{veryf} \rm A module $M$ is \emph{very flat}, provided that $M \in \mathcal{VF}$ where $(\mathcal{VF}, \mathcal{CA})$ denotes the complete cotorsion pair generated by the set  
$$\mathcal L = \{ R[r^{-1}] \mid  r \in R \}$$ 
and $R[r^{-1}]$ is the localization of $R$ at the multiplicative set $\{ 1, r, r^2, \dots \}$. The modules in the class $\mathcal{CA}$ are called \emph{contraadjusted}, \cite[\S1.1]{P}.
\end{definition}

Clearly, each projective module is very flat. Since the localization $R[r^{-1}]$ is a flat module for each $r \in R$, all very flat modules are flat, and hence each cotorsion module is contraadjusted. We postpone our investigation of contraadjusted modules to Section \ref{sect-CA}, and start with a more a precise description of very flat modules:
 
\begin{lemma}\label{morep}  
Each very flat module has projective dimension $\leq 1$. Moreover, $\mathcal{VF}$ coincides with the class of all direct summands of $\mathcal L$-filtered modules, and also with the class of all $\mathcal C$-filtered modules, where $\mathcal C$ is the class of all countably presented very flat modules. Each countably generated very flat module $M$ is a direct summand in a module possessing an $\mathcal L$-filtration of length $\sigma$, where $\sigma$ is a countable ordinal; in particular, $M$ is countably presented.   
\end{lemma}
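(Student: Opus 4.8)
The plan is to read everything off the general machinery of Theorem \ref{approx-main} applied to the set $\mathcal L = \{ R[r^{-1}] \mid r \in R\}$, exploiting that $R = R[1^{-1}] \in \mathcal L$. First I would note that every module in $\mathcal L$ has projective dimension $\leq 1$: indeed $R[r^{-1}] = \varinjlim(R \overset{r}\to R \overset{r}\to \cdots)$ is a countable direct limit of copies of $R$, hence $R[r^{-1}]$ is a countably presented flat module, and more precisely it fits into a short exact sequence $0 \to R^{(\omega)} \to R^{(\omega)} \to R[r^{-1}] \to 0$ (the standard telescope presentation), giving $\pd{R[r^{-1}]} \leq 1$. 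Since the class of modules of projective dimension $\leq 1$ is closed under transfinite extensions (by Auslander's lemma / the fact that $\mathrm{Ext}^2$ vanishes is preserved under filtrations, cf.\ \cite{GT}) and under direct summands, every $\mathcal L$-filtered module, and hence every very flat module (which by Theorem \ref{approx-main}(2) is a direct summand of an $\mathcal L$-filtered module), has projective dimension $\leq 1$.

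Next, the two descriptions of $\mathcal{VF}$ as direct summands of $\mathcal L$-filtered modules and as $\mathcal C$-filtered modules are exactly the content of Theorem \ref{approx-main}(2): the first holds because $R \in \mathcal L$; the second holds by taking $\kappa = \aleph_1$, since every module in $\mathcal L$ is countably presented (hence $< \aleph_1$-presented), so $\mathcal A = {}^\perp(\mathcal L^\perp) = \mathcal{VF}$ coincides with the class of all $\mathcal C$-filtered modules where $\mathcal C$ is the class of all countably presented members of $\mathcal{VF}$. Here one should remark that $\mathcal C$ as defined (countably \emph{presented} very flat modules) agrees with the "$< \aleph_1$-presented modules from $\mathcal A$" appearing in the theorem.

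Finally, for the last sentence: let $M$ be a countably generated very flat module. By the previous paragraph $M$ is a direct summand of some $\mathcal L$-filtered module $N$, say $N = M \oplus M'$ with $\mathcal L$-filtration $\mathcal N = (N_\alpha \mid \alpha \leq \sigma)$. The members of $\mathcal L$ are countably presented, so I would apply the Hill lemma (Lemma \ref{hill}) with $\kappa = \aleph_1$ to this filtration: let $\mathcal H$ be the resulting family. Using property (iv) repeatedly, build an increasing countable chain $P_0 \subseteq P_1 \subseteq \cdots$ in $\mathcal H$ starting from $P_0 = 0$ whose union $P = \bigcup_{n<\omega} P_n$ contains a fixed countable generating set of $M$ and such that each $P_{n+1}/P_n$ is countably presented; then $P$ is $\mathcal L$-filtered by a filtration of countable length (refine each $\mathcal L$-filtered quotient $P_{n+1}/P_n$, which is $\mathcal L$-filtered by (iii), into an $\omega$-indexed piece and concatenate — this uses that a countably presented $\mathcal L$-filtered module has an $\mathcal L$-filtration of length $\leq \omega$). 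Since $M \subseteq P \subseteq N$ and $M$ is a direct summand of $N$, it is a direct summand of $P$. Hence $M$ is a direct summand in a module possessing an $\mathcal L$-filtration of countable length $\sigma$; as each factor $R[r^{-1}]$ is countably presented and $\sigma$ is countable, a standard counting argument shows such a module is countably presented, and therefore so is its direct summand $M$.

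The main obstacle I anticipate is the bookkeeping in the last step: one must arrange the chain $(P_n)$ inside $\mathcal H$ so that, simultaneously, the union captures all of $M$ (to guarantee $M$ splits off $P$) and each successive quotient $P_{n+1}/P_n$ is countably presented \emph{and} $\mathcal L$-filtered of length $\leq \omega$, so that the concatenated filtration of $P$ genuinely has countable length. This is routine given Lemma \ref{hill}(iii)–(iv), but the interplay between "$\mathcal L$-filtered" and "$\mathcal C$-filtered of length $\omega$" for countably presented modules needs to be handled carefully (one reduces an arbitrary $\mathcal L$-filtration of a countably presented module to one of length at most $\omega$ using that each $R[r^{-1}]$ is countably generated together with a back-and-forth / Hill-lemma argument at the level of $\mathcal H$).
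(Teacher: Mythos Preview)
Your proof is correct and follows essentially the same route as the paper: the telescope presentation of $R[r^{-1}]$ for projective dimension $\leq 1$, Theorem~\ref{approx-main}(2) for the two descriptions of $\mathcal{VF}$, and the Hill Lemma for the final claim. The only difference is that in the last step you iterate Hill~(iv) to build a chain $(P_n)$, whereas the paper applies Hill~(iv) once (with $N=0$ and $S$ a countable generating set of $M$) to obtain a single countably presented $P \in \mathcal H$ containing $M$; since $P$ is countably generated, any $\mathcal L$-filtration of it stabilizes at a countable ordinal, which dissolves the bookkeeping obstacle you anticipated.
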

\begin{proof} For the first claim, note that for each $r \in R$, the module $R[r^{-1}]$ is the direct limit of the direct system $R \overset{f_r}\to R \overset{f_r}\to R \to \dots $ where $f_r(1) = r$, so there is an exact sequence 
\begin{equation}\label{present}
0 \to R^{(\omega)} \overset{g_r}\to R^{(\omega)} \to R[r^{-1}] \to 0
\end{equation}
where $g_r(1_i) = 1_i -  1_{i+1}\cdot r$ for each $i < \omega$ and $( 1_i \mid i < \omega )$ denotes the canonical free basis of $R^{(\omega)}$. This shows that $R[r^{-1}]$ is countably presented, and has projective dimension $\leq 1$. The latter property extends to each (direct summand of an) $\mathcal L$-filtered module. 

The second claim follows from Theorem \ref{approx-main}(2).

Finally, if $M$ is a countably generated very flat module, then $M$ is a direct summand in a module $N$ possessing an $\mathcal L$-filtration $(N_\alpha \mid \alpha \leq \sigma )$. By the Hill Lemma \ref{hill}, we can modify the filtration so that $M \subseteq N_\tau$ for a countable ordinal $\tau \leq \sigma$.
\end{proof}
 
We continue with some more specific observations in the particular cases of domains, and of the noetherian rings possessing artinian classical quotient rings: 

\begin{lemma}\label{general}  
\begin{itemize}
\item[(i)] Assume that $R$ has an artinian classical quotient ring. 
Let $M$ be a submodule of a very flat module such that $r(M) = t < \infty$. Then there exist a finite sequence of non-nilpotent elements $\{ s_i \mid i < n \}$ of $R$ and a strictly increasing chain  
$$0 = M_0 \subsetneq M_1 \subsetneq \dots \subsetneq M_{n-1} \subsetneq M_n = M$$
such that $M_{i+1}/M_i$ is isomorphic to a submodule of $R[s_i^{-1}]$ for each $i < n$. 
Moreover, $t = n$ in case $R$ is a domain.
\item[(ii)] Assume that $R$ is a noetherian ring which has an artinian classical quotient ring. Let $M$ be a non-zero very flat module with $r(M) = t < \infty$. Then there exists $s \in R$ such that $M \otimes_R R[s^{-1}]$ is a non-zero finitely generated projective $R[s^{-1}]$-module. If $R$ is moreover a domain, then $M \otimes_R R[s^{-1}]$ has rank $t$.
\item[(iii)] Assume $R$ is a domain. Then $Q$ is very flat, iff $Q = R[s^{-1}]$ for some $0 \neq s \in R$. In this case, $Q$ has projective dimension $1$. 
\end{itemize}
\end{lemma}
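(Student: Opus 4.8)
The plan is to prove (i) by induction on the reduced rank and then to deduce (ii) and (iii) from it.

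\emph{Part (i).} I would induct on $t=r(M)$. Using Lemma~\ref{morep}, realise $M$ as a submodule of an $\mathcal L$-filtered module $W$ with $\mathcal L$-filtration $(W_\alpha\mid\alpha\le\sigma)$ and $W_{\alpha+1}/W_\alpha\cong R[r_\alpha^{-1}]$, and put $M_\alpha=M\cap W_\alpha$; then $(M_\alpha)_{\alpha\le\sigma}$ is a filtration of $M$ whose factors embed into the corresponding $R[r_\alpha^{-1}]$. Two facts drive the argument: first, a very flat module is flat, hence torsion-free (a regular element acts injectively on it), so a submodule $L$ of $M$ with $r(L)=0$ is both torsion-free and $S$-torsion, and since by Theorem~\ref{small} $S$ is exactly the set of regular elements, $L=0$; second, the reduced rank is additive on short exact sequences, so $r(M_\gamma)\le t$ for all $\gamma$, and if $\gamma$ is least with $r(M_\gamma)>0$ then $\gamma=\alpha+1$ is a successor (a limit would force $M_\gamma=\bigcup_{\beta<\gamma}M_\beta$ with all $M_\beta=0$) and $M_\alpha=0$. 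Thus $M_{\alpha+1}$ is a nonzero submodule of $R[r_\alpha^{-1}]$, so $r_\alpha$ is non-nilpotent; set $s_0=r_\alpha$. Since $M/M_{\alpha+1}\cong(M+W_{\alpha+1})/W_{\alpha+1}$ is again a submodule of a very flat module and $r(M/M_{\alpha+1})=t-r(M_{\alpha+1})<t$, the induction hypothesis gives a chain for $M/M_{\alpha+1}$; pulling it back along $M\twoheadrightarrow M/M_{\alpha+1}$ and prepending $0\subsetneq M_{\alpha+1}$ yields the chain for $M$. When $R$ is a domain, $M_{\alpha+1}\subseteq R[r_\alpha^{-1}]\subseteq Q$ has rank exactly $1$, so every step drops $t$ by exactly $1$ and the length is $n=t$.

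\emph{Part (ii).} First $t\ge1$, for $t=0$ makes $M$ torsion-free and $S$-torsion, hence $0$. Apply (i) with the ambient very flat module taken to be $M$ itself: $0=M_0\subsetneq\cdots\subsetneq M_n=M$ with $N_i:=M_{i+1}/M_i\hookrightarrow R[s_i^{-1}]$, each $s_i$ non-nilpotent. The aim is a single $s\in R$ with $R[s^{-1}]\neq0$, $M\otimes_RR[s^{-1}]\neq0$, and each $N_i\otimes_RR[s^{-1}]$ finitely generated over $R[s^{-1}]$; granting this, $M\otimes_RR[s^{-1}]$ is finitely generated (an $n$-step extension of finitely generated modules) and flat over $R[s^{-1}]$ (base change of $M$), hence finitely presented and flat over the noetherian ring $R[s^{-1}]$, so projective, and it is nonzero. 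For the domain refinement, $(M\otimes_RR[s^{-1}])\otimes_{R[s^{-1}]}Q\cong M\otimes_RQ$ has $Q$-dimension $t$, so this projective has rank $t$. To pick $s$: since $M$ is torsion-free with $r(M)\ge1$, $M\otimes_RQ\neq0$, and as the artinian classical quotient ring decomposes $Q\cong\prod_{\mathfrak p}R_{\mathfrak p}$ over the (finitely many) minimal primes of $R$, some $M_{\mathfrak p}\neq0$ with $\mathfrak p$ minimal; here $R_{\mathfrak p}$ is artinian local and $M_{\mathfrak p}$ is very flat, hence free, of finite rank over it. I would take $s$ to be the product of those $s_i$ with $s_i\notin\mathfrak p$: then $s\notin\mathfrak p$, so $s$ is non-nilpotent and $(M\otimes_RR[s^{-1}])_{\mathfrak p}=M_{\mathfrak p}\neq0$, and whenever $s_i\mid s$ the factor $N_i\otimes_RR[s^{-1}]$ is, via $R[s_i^{-1}]\otimes_RR[s^{-1}]=R[s^{-1}]$, the ideal of the noetherian ring $R[s^{-1}]$ generated by the image of $N_i$, hence finitely generated. \textbf{The main obstacle} is precisely the remaining factors $N_i$ with $s_i\in\mathfrak p$: one must still show that $N_i\otimes_RR[s^{-1}]$, a submodule of $R[s_i^{-1}]\otimes_RR[s^{-1}]=R[(s_is)^{-1}]$, is finitely generated (ideally zero) over $R[s^{-1}]$, which is not automatic because $R[(s_is)^{-1}]$ is a localisation, not a quotient, of $R[s^{-1}]$; resolving it requires comparing the supports of the $N_i$ with $D(s)$ via the minimal primes of $R$, possibly after enlarging $s$ by further $s_i$'s, and this is where both hypotheses on $R$ are genuinely used. (The crude choice $s=\prod_{i<n}s_i$ does not work: for $R$ a finite direct product of fields, or $R=k[x,y]/(xy)$, it comes out nilpotent.)

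\emph{Part (iii).} For $\Leftarrow$: if $Q=R[s^{-1}]$ then $Q\in\mathcal L$ is very flat, and the presentation~\eqref{present} gives $\pd{Q}\le1$, with equality unless $R$ is a field, since over a domain a projective $Q$ would yield a nonzero $R$-homomorphism $Q\to R$, forcing every nonzero element of $R$ to divide a fixed nonzero one and hence to be a unit. For $\Rightarrow$: a domain has its quotient field as an artinian classical quotient ring, so (i) applies to $M=Q$, and $t=r(Q)=1$ forces $n=1$, i.e.\ $Q$ is isomorphic to a submodule $Q'$ of $R[s_0^{-1}]\subseteq Q$ with $0\neq s_0\in R$. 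The composite $Q\xrightarrow{\ \sim\ }Q'\hookrightarrow Q$ lies in $\End_R(Q)=Q$, so it is multiplication by a nonzero, hence invertible, element of $Q$; therefore $Q'=Q$ and $R[s_0^{-1}]=Q$.
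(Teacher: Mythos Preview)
Your parts (i) and (iii) are correct and follow the paper's route: intersect $M$ with an ambient $\mathcal L$-filtration and invoke additivity of the reduced rank together with Theorem~\ref{small}; the inductive packaging in (i) is a cosmetic variation of the paper's direct count. For (iii), your remark that the composite $Q\hookrightarrow R[s_0^{-1}]\hookrightarrow Q$ lies in $\End_R(Q)=Q$ is just a rephrasing of the paper's ``$Q\subseteq R[s_0^{-1}]\subseteq Q$''. In the \emph{domain} case your part (ii) is also complete and agrees with the paper: the only minimal prime is $(0)$, so no $s_i$ lies in $\mathfrak p$ and your $s=\prod_i s_i$ is exactly the paper's choice.

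The genuine gap is the one you yourself flagged, in the non-domain case of (ii). Your scheme can stall: over $R=k[x,y]\times k$ (noetherian, reduced, with artinian total quotient ring $k(x,y)\times k$) take $M=R[(x,0)^{-1}]\oplus R[(y,1)^{-1}]$ with the obvious two-step filtration. Choosing the minimal prime $\mathfrak p=k[x,y]\times 0$ (which does satisfy $M_{\mathfrak p}\neq0$) gives $s=(y,1)$, and then
\[
N_0\otimes_R R[s^{-1}]\;\cong\;R[(xy,0)^{-1}]\;\cong\;k[x^{\pm1},y^{\pm1}]\times 0
\]
is \emph{not} finitely generated over $R[s^{-1}]\cong k[x,y^{\pm1}]\times k$. (The other minimal prime happens to work here, but you give no rule for selecting $\mathfrak p$, and ``enlarging $s$'' is precisely what needs to be organised.) The paper avoids minimal primes entirely. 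For each $i$ it chooses a subset $X_i\ni i$ of $\{0,\dots,n-1\}$, maximal subject to $(M_{i+1}/M_i)\otimes_R R[p_i^{-1}]\neq0$ where $p_i=\prod_{j\in X_i}s_j$; it then picks $k$ with $X_k$ inclusion-maximal among all the $X_i$ and sets $s=p_k$. For $i\in X_k$ one has $s_i\mid s$, so $(M_{i+1}/M_i)\otimes_R R[s^{-1}]$ embeds as an ideal of the noetherian ring $R[s^{-1}]$; for $j\notin X_k$, a nonzero element of $(M_{j+1}/M_j)\otimes_R R[s^{-1}]$ would survive further localisation at $s_j$, yielding a valid $X_j\supseteq X_k\cup\{j\}$ and contradicting maximality of $X_k$. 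This maximality device is the missing idea in your proposal.
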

\begin{proof} (i) By assumption, $M$ is a submodule in an $\mathcal L$-filtered module $P$. Let $\mathcal P = ( P_\alpha \mid \alpha \leq \sigma )$ be an $\mathcal L$-filtration of $P$. For each $\alpha \leq \sigma$, let $M_\alpha = M \cap P_\alpha$. Then the consecutive factor $M_{\alpha +1}/M_\alpha$ is isomorphic to a submodule of $P_{\alpha +1}/P_\alpha \in \mathcal L$. 

If $R$ is a domain, then exactly $t$ of these consecutive torsion-free factors are non-zero (and of rank $1$). So the chain has exactly $t+1$ distinct terms, $0 = M_0 \subsetneq M_1 \subsetneq \dots \subsetneq M_{t-1} \subsetneq M_t = M$. 

In the general case, by Theorem \ref{small}, the elements of $R$ regular modulo the prime radical coincide with the regular elements of $R$. If $s \in R$ is not nilpotent and $0 \neq M^\prime \subseteq R[s^{-1}]$, then $M^\prime$ is a torsion-free module, whence $r(M^\prime) > 0$. So if the consecutive factor $M_{\alpha +1}/M_\alpha \subseteq R[s_\alpha^{-1}]$ is non-zero, then it has non-zero reduced rank. The additivity of the reduced rank yields that there are only finitely many such non-zero consecutive factors, and the claim follows.       

(ii) For each $i < n$, let $s_i$ be a (non-nilpotent) element of $R$ such that $0 \neq M_{i+1}/M_i$ embeds into $R[s_i^{-1}]$. Let $X_i = \{ i_0, i_1, \dots , i_k \}$ be a subset of $n$ such that $i \in X_i$, $0 \neq (M_{i+1}/M_i) \otimes_R R[s_{i_0}^{-1}] \otimes_R \dots \otimes_R R[s_{i_k}^{-1}] \cong (M_{i+1}/M_i) \otimes_R R[p_i^{-1}]$ for $p_i = \prod_{j \in X_i} s_j$, but $(M_{i+1}/M_i) \otimes_R R[p_i^{-1}] \otimes_R R[s_j^{-1}] = 0$ for each $j \notin X_i$. 

Consider $k < n$ such that $X_k$ is maximal, that is, $X_k$ is not properly contained in $X_i$ for any choice of $X_i$ as above and any $i < n$. Then for each $i \in X_k$, $I_i = (M_{i+1}/M_i) \otimes_R R[p_k^{-1}] \subseteq R[s_i^{-1}] \otimes_R R[p_k^{-1}] \cong R[p_k^{-1}]$ is isomorphic to a (finitely generated) ideal of the noetherian ring $R[p_k^{-1}]$. Moreover, $I_k \neq 0$ by the definition of $X_k$. 

If $j \notin X_k$, then $I_j = (M_{j+1}/M_j) \otimes_R R[p_k^{-1}] = 0$: Otherwise, there is $0 \neq x = r/s_j^m \in M_{j+1}/M_j \subseteq R[s_j^{-1}]$ such that $x$ is not annihilated by any power of $p_k$, whence $x$ is not annihilated by any power of $p_k.s_j$, too. So $I_j \otimes_R R[s_j^{-1}] \neq 0$, which implies that we can choose $X_j$ so that $X_k \subseteq X_j$ and $j \in X_j \setminus X_k$, in contradiction with the maximality of $X_k$. 

It follows that the $R[p_k^{-1}]$-module $M \otimes_R R[p_k^{-1}]$ is $\{ I_i \mid i \in X_k \}$-filtered. Since $M$ is very flat, putting $s = p_k$, we conclude that $M \otimes_R R[s^{-1}]$ is non-zero, finitely generated, and flat (even very flat, \cite[1.2.2]{P}), hence a non-zero projective $R[s^{-1}]$-module. 

Moreover, if $R$ is a domain, then $X_k = n = t$ and $I_i \neq 0$ for each $i \in X_k$, whence $M \otimes_R R[s^{-1}]$ has rank $t$. 

(iii) If $Q \in \mathcal{VF}$, then since $Q$ has rank $1$, the chain $\mathcal N$ constructed in part (i) consists only of two elements, $0$ and $Q$, and $Q$ is a submodule in $R[s^{-1}]$ for some $0 \neq s \in R$, whence 
$Q = R[s^{-1}]$. The latter equality clearly implies $Q \in \mathcal{VF}$, whence $Q$ has projective dimension $1$ by Lemma \ref{morep}.  
\end{proof}

\begin{example}\label{non-art} In the case when $R$ is noetherian, but $R$ does not have artinian classical quotient ring, there may exist non-zero torsion-free submodules of $R[s^{-1}]$ whose rank is zero. So the argument in the proof of Lemma \ref{general}(i) involving additivity of the rank does not apply.

For an example, consider the ring $R = k[x,y]/I$ where $I = (xy,y^2)$ (see \cite[p.193]{GW}). Then the prime radical $N = \rad R$ of $R$ is generated by $y + I$. Moreover, $x$ is regular modulo $N$ (but it is not regular in $R$). Since $x$ annihilates $N$, \cite[Lemma 11.5]{GW} implies $r(N) = 0$, though $N$ is a non-zero (in fact, simple) torsion-free submodule of $R$.  
\end{example}    

In Proposition \ref{frank}(ii) below, we will see that if $M$ is of finite rank over a Dedekind domain $R$, then the converse of Lemma \ref{general}(ii) holds, that is, $M$ is very flat, iff there exists $0 \neq s \in R$ such that $M \otimes_R R[s^{-1}]$ is a projective $R[s^{-1}]$-module. However, the converse fails already for rank one modules over regular domains of Krull dimension $2$:

\begin{example}\label{nconverse}
Let $k$ be a field, $R = k[x, y]$, and
\[ \textstyle M = R\bigl[\frac xy, \frac{x^2}{y^2}, \ldots \bigr]. \]
Clearly, $R \subseteq M \subseteq Q$, so $M$ has rank $1$, and $M \otimes_R R[y^{-1}] \cong R[y^{-1}]$ is a free $R[y^{-1}]$-module of rank $1$.
We will show that $M$ is not even a flat module.

Let $m$ be the maximal ideal generated by the elements $x$ and $y$. Our goal is to show that the inclusion $m \to R$ is not injective after tensoring by $M$; namely, the element $x \otimes 1 - y \otimes (x/y) \in m \otimes_R M$ is nonzero, but maps to zero in the module $R \otimes_R M \cong M$. The latter being clear, let us verify the former: If $x \otimes 1 - y \otimes (x/y) = 0$ in $m \otimes_R M$, then (using the criterion for vanishing of an element of a tensor product, cf.\ \cite[Proposition I.8.8]{S}) there are $r_0, r_1, \ldots, s_0, s_1, \ldots \in R$, all but finitely many equal to zero, such that
\begin{align}
1 &= \textstyle\sum_{i<\omega} r_i \frac{x^i}{y^i},\label{nconverse1}\\
-\tfrac xy &= \textstyle\sum_{i<\omega} s_i \frac{x^i}{y^i}\label{nconverse2}
\end{align}
and for each $i < \omega$,
\begin{equation}
0 = r_i x + s_i y.\label{nconverse3}
\end{equation}
However, from \eqref{nconverse1}, we have $r_0 \neq 0$, thus $s_0 \neq 0$ by \eqref{nconverse3}. The same eqation then implies that $r_0$ is a multiple of $y$, therefore there cannot be a constant term on the right-hand side of \eqref{nconverse1}, a contradiction.
\end{example}

\medskip
We will continue by establishing some tools for proving that certain modules are \emph{not} very flat.

Our first tool is purely algebraic and employs the notion of an associated prime of a module \cite[\S 6]{M}:

Let $R$ be a noetherian ring, and let $Q$ denote its injective hull. Then
\begin{equation}\label{E(QbyR)}
E(Q/R) = \bigoplus_{p \in P} E(R/p)^{(\alpha_p)}
\end{equation} 
where $P = \Ass {R}{Q/R} \subseteq \Spec R$ and $\alpha_p = \mu_1(p,R)$ is the first Bass invariant of $R$ at $p$ (see \cite[\S9.2]{EJ}). 

For each $i \leq \mbox{Kdim}(R)$, we let $P_i$ denote the set of all prime ideals of height $i$. Since $P_0 \subseteq \Ass RR$, we have $P_1 \subseteq P$ by \cite[9.2.13]{EJ}. Of course, if $R$ is a noetherian domain of Krull dimension $1$, then $P = P_1$. In general, $\mbox{rad}(R) = \bigcap_{p \in P_0} p$ is the set of all nilpotent elements of $R$, while $Z(R) =  \bigcup_{p \in \Ass RR} p$ is the set of all zero-divisors of $R$. 

Let $s \in R$ be a non-zero divisor and $O(s) = \{ p \in P_1 \mid s \in p \}$. Then each $p \in O(s)$ is a minimal prime over $sR$, so the set $O(s)$ is finite. Moreover, for each $p \in P_1$, we have $R/p \otimes_R R[s^{-1}] = 0$, iff $p \in O(s)$. Indeed, $s \in p$ implies $(r + p) \otimes_R t/s^k =  (r.s + p) \otimes_R t/s^{k+1} = 0$, while if $s \notin p$, then $p \otimes_R R[s^{-1}]$ is a prime ideal in $R[s^{-1}]$, and $R/p \otimes_R R[s^{-1}] \cong R[s^{-1}]/(p \otimes_R R[s^{-1}]) \neq 0$.

\begin{lemma}\label{ass} Let $R$ be a noetherian domain. Let $M$ be a very flat module of finite rank $t$, and $F$ be its free submodule of the same rank. Then the set $P_1 \cap \Ass {R}{M/F}$ is finite.  
\end{lemma}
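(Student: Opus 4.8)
The plan is to reduce to the finitely generated projective case handled by Lemma~\ref{general}(ii) by an appropriate localization, and then argue that associated primes of height $1$ are forced to lie in a finite controlled set. First I would invoke Lemma~\ref{general}(ii): since $M$ is very flat of finite rank $t$, there is $0 \neq s \in R$ with $M \otimes_R R[s^{-1}]$ a finitely generated projective $R[s^{-1}]$-module of rank $t$. Because $F$ is a free submodule of $M$ of rank $t$, the map $F \otimes_R R[s^{-1}] \to M \otimes_R R[s^{-1}]$ is an injection of free modules of the same rank $t$ (injectivity survives since $M/F$ is torsion and $R[s^{-1}] \subseteq Q$, so tensoring the exact sequence $0 \to F \to M \to M/F \to 0$ with $R[s^{-1}]$ keeps the left map injective after noting the kernel is a torsion submodule of a free module). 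Hence $(M/F) \otimes_R R[s^{-1}]$ is a finitely generated $R[s^{-1}]$-module: indeed it is the cokernel of a map between finitely generated modules over the noetherian ring $R[s^{-1}]$.

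Next I would relate $\Ass{R}{M/F}$ to the associated primes over $R[s^{-1}]$. For a prime $p \in P_1$ with $s \notin p$, the localization $p \otimes_R R[s^{-1}]$ is a prime of $R[s^{-1}]$ of height $1$, and by the standard behaviour of associated primes under localization at a multiplicative set (cf.\ \cite[\S6]{M}), $p \in \Ass{R}{M/F}$ with $s \notin p$ forces $p \otimes_R R[s^{-1}] \in \Ass{R[s^{-1}]}{(M/F)\otimes_R R[s^{-1}]}$. But $(M/F) \otimes_R R[s^{-1}]$ is a finitely generated module over the noetherian ring $R[s^{-1}]$, so its set of associated primes is finite; hence there are only finitely many $p \in P_1$ with $s \notin p$ that can lie in $\Ass{R}{M/F}$. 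On the other hand, the primes $p \in P_1$ with $s \in p$ are exactly the elements of $O(s) = \{p \in P_1 \mid s \in p\}$, which as noted in the text just before the lemma is a \emph{finite} set (each such $p$ is minimal over $sR$). Therefore $P_1 \cap \Ass{R}{M/F} \subseteq O(s) \cup \{\,p \in P_1 : s \notin p,\ p \otimes_R R[s^{-1}] \in \Ass{R[s^{-1}]}{(M/F)\otimes_R R[s^{-1}]}\,\}$, a union of two finite sets, hence finite.

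The main obstacle I anticipate is the careful bookkeeping in the middle step: verifying that $F \otimes_R R[s^{-1}] \hookrightarrow M \otimes_R R[s^{-1}]$ remains injective (so that $(M/F)\otimes_R R[s^{-1}]$ is genuinely a finitely generated torsion module and not accidentally all of the projective module), and confirming the precise localization compatibility $\Ass{R[s^{-1}]}{L \otimes_R R[s^{-1}]} = \{\,p \otimes_R R[s^{-1}] : p \in \Ass{R}{L},\ s \notin p\,\}$ for a finitely generated module $L$ over noetherian $R$. Both are standard, but they are where the argument could go wrong if one is sloppy about whether $M/F$ could have $s$-divisible parts. Once these are in place, the finiteness of $O(s)$ and of the associated primes of a finitely generated module over a noetherian ring close the argument immediately.
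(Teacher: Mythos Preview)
Your proposal is correct and follows essentially the same route as the paper: invoke Lemma~\ref{general}(ii) to obtain $s$ with $(M/F)\otimes_R R[s^{-1}]$ finitely generated over the noetherian ring $R[s^{-1}]$, then split $P_1\cap\Ass{R}{M/F}$ into the finite set $O(s)$ and those primes avoiding $s$, which correspond to associated primes of the localized module. Your worries at the end are unfounded: injectivity of $F\otimes_R R[s^{-1}]\to M\otimes_R R[s^{-1}]$ is immediate from flatness of $R[s^{-1}]$ over $R$, and the localization compatibility for associated primes over a noetherian ring holds for arbitrary modules, not just finitely generated ones (so you need not assume $M/F$ finitely generated over $R$).
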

\begin{proof} By Lemma \ref{general}, there is $0 \neq s \in R$ such that $M \otimes_R R[s^{-1}]$ is a finitely generated $R[s^{-1}]$-module, whence $A = \Ass {R[s^{-1}]}{(M/F) \otimes_R R[s^{-1}]}$ is finite. 

Let $p \in P_1 \cap \Ass {R}{M/F}$, that is, $R/p \subseteq M/F$. If $p \notin O(s)$, then $R[s^{-1}]/(p \otimes_R R[s^{-1}]) \subseteq M/F \otimes_R R[s^{-1}]$, so $p \otimes_R R[s^{-1}] \in A$. It follows that $\card (P_1 \cap \Ass {R}{M/F}) \leq \card A  + \card O(s)$ is finite.
\end{proof}  

\medskip
Our next tool, the support of a module, comes from \cite{P}. We prefer the term \emph{P-support} here in order to distinguish it from the (different) standard notion of support used in commutative algebra, cf.\ \cite{EJ}, \cite{M}.
   
\begin{definition}\label{P-support}
For a module $M$ over a noetherian ring $R$, define its \emph{P-support} to be the set
\[ \PSupp M = \{p \in \Spec R \mid M \otimes_R k(p) \neq 0\}, \]
where $k(p)$ denotes the residue field of the prime ideal $p$.
\end{definition}

Note that for each ring homomorphism $f: R \to S$, the set $\PSupp (S)$ is the (underlying set of the) image of the induced scheme morphism $f^*: \Spec S \to \Spec R$.

The significance of P-support comes from the following:

\begin{lemma}\label{psupp}
The P-support of every very flat module is an open subset of $\Spec R$. Moreover, it is always nonempty, provided that the module is non-zero and $R$ is noetherian or reduced.
\end{lemma}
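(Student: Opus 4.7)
My plan is to establish openness by exploiting the $\mathcal{L}$-filtration structure from Lemma~\ref{morep}, and to obtain non-emptiness by passing to minimal primes of $R/\rad R$.

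For openness, the starting point is that $\PSupp R[r^{-1}] = D(r)$, a basic open. The key computational input is that whenever $0 \to A \to B \to C \to 0$ is exact with $C$ flat, $\Tor 1RC{k(p)}=0$ keeps $-\otimes_R k(p)$ left-exact, so $\PSupp B = \PSupp A \cup \PSupp C$; combined with the fact that $-\otimes_R k(p)$ commutes with direct limits, transfinite induction shows that $\PSupp N = \bigcup_{\alpha<\sigma} D(r_\alpha)$ is open for any $\mathcal L$-filtered $N$ with filtration $(N_\alpha)_{\alpha\leq\sigma}$ and $N_{\alpha+1}/N_\alpha\cong R[r_\alpha^{-1}]$. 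Invoking the second description in Lemma~\ref{morep}, every very flat module is $\mathcal C$-filtered (with $\mathcal C$ the class of countably presented very flats, each of them flat), so the same inductive recipe reduces the openness of $\PSupp M$ to openness of $\PSupp C$ for countably presented very flat $C$.

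\textbf{The main obstacle} is that such a $C$ is only guaranteed to be a \emph{direct summand} of a countably $\mathcal L$-filtered module $N = C \oplus C'$, and openness of $\PSupp$ does not in general pass to direct summands. My plan is to apply Hill's Lemma~\ref{hill} with $\kappa = \aleph_1$ to the $\mathcal L$-filtration of $N$, obtaining a distributive sublattice $\mathcal H$ of submodules of $N$. Using the countable generation of $N$ (available because its $\mathcal L$-filtration has countable length) and the $R$-linearity of the projections $\pi_C$ and $\pi_{C'}$, I would inductively construct a continuous chain $(P_n)_{n<\omega}$ of members of $\mathcal H$ with $P_n = (P_n\cap C)\oplus(P_n\cap C')$, alternating between adding a generator of $N$ together with its $C$- and $C'$-components (via Hill (iv)) and closing each stage under the projections by taking a countable iterated join inside $\mathcal H$. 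The induced filtration $(P_n\cap C)$ of $C$ then has successive quotients that are direct summands of elements of $\mathcal L$; since such summands arise from idempotents of $R[r^{-1}]$ and are themselves basic localizations $R[(rs)^{-1}]$ (trivially so over an integral domain, where the only idempotents are $0$ and $1$), we recover an $\mathcal L$-type filtration of $C$ itself, and the openness argument of the first paragraph applies.

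For non-emptiness, assume $M\neq 0$. When $R$ is noetherian, I reduce modulo the nilradical: because $\rad R$ is nilpotent, $M/(\rad R)M\neq 0$. The reduced noetherian ring $R/\rad R$ has finitely many minimal primes $\bar p_1,\ldots,\bar p_n$ and embeds in their finite product $\prod_i (R/\rad R)/\bar p_i$; flatness of $M/(\rad R)M$ over $R/\rad R$ yields a corresponding embedding $M/(\rad R)M\hookrightarrow \prod_i M\otimes R/p_i$ (with $p_i$ the minimal primes of $R$ corresponding to $\bar p_i$), forcing $M\otimes R/p\neq 0$ for some minimal $p$. Since $M\otimes R/p$ is flat and nonzero over the domain $R/p$, it is torsion-free and survives tensoring with $k(p)=\operatorname{Frac}(R/p)$, giving $p\in\PSupp M$. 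For reduced $R$ without the noetherian hypothesis, I would first pass to a nonzero countably presented very flat subquotient of $M$ via the $\mathcal C$-filtration of Lemma~\ref{morep}; this subquotient sits inside a module whose $\mathcal L$-filtration is parametrized by countably many elements $r_\alpha$, each of which is non-nilpotent and hence genuinely nonzero in the reduced ring, allowing the question to be localized to an essentially noetherian subring and the previous argument to be adapted.
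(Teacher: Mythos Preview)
The paper's own proof is a bare citation of Positselski \cite[1.7.3--1.7.6]{P}, so there is no in-paper argument to compare against; the question is whether your attempt stands on its own.

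Your openness argument has a genuine gap at the direct-summand step. The Hill-Lemma construction you describe produces a chain $(P_n)_{n<\omega}$ in $\mathcal H$ with $P_n = (P_n\cap C)\oplus(P_n\cap C')$, and then $(P_{n+1}\cap C)/(P_n\cap C)$ is a direct summand of $P_{n+1}/P_n$. But $P_{n+1}/P_n$ is only guaranteed by Hill~(iii) to be \emph{$\mathcal L$-filtered}, not to lie in $\mathcal L$: the ``closing under projections via a countable iterated join'' necessarily sweeps through many layers of the original $\mathcal L$-filtration at once, so each $P_{n+1}/P_n$ is a countably $\mathcal L$-filtered (i.e.\ countably presented very flat) module. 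Thus the successive quotients of your induced filtration of $C$ are direct summands of countably presented very flat modules --- exactly the class $C$ itself belonged to --- and the argument is circular. Your observation about idempotents of $R[r^{-1}]$ (which is correct: if $e=a/r^n$ is idempotent then $eR[r^{-1}]\cong R[(ra)^{-1}]$) does not help here, since it only treats summands of a single $R[r^{-1}]$, not of an $\mathcal L$-filtered module. Openness of the P-support does not in general pass to direct summands, and this is precisely the obstacle Positselski's argument has to overcome.

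Your non-emptiness argument in the noetherian case is correct. For the reduced non-noetherian case, the sketch (``localize to an essentially noetherian subring and adapt the previous argument'') is too vague to assess: you do not specify the subring, nor explain why non-vanishing of $M\otimes_R k(p)$ can be detected there. This part would need to be made precise before it could be accepted.
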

\begin{proof}
This follows directly from \cite[1.7.3--1.7.6]{P}.
\end{proof}

Lemma \ref{psupp} extends also to another kind of commutative coherent rings, the von Neumann regular ones. In fact, for those rings, all very flat modules are projective: 

\begin{example}\label{vonNeumann} Let $R$ be a von Neumann regular ring, that is, a ring such that for each $s \in R$ there is a (pseudo-inverse) element $u \in R$ such that $s = sus$, or equivalently, each module is flat. If $R$ is moreover commutative, then $R$ is unit regular, meaning that the pseudo-universe $u$ can always be chosen invertible in $R$, see \cite[4.2]{G}. 

For each $s \in R$, there is an $R$-isomorphism of $R[s^{-1}]$ on to $R[e^{-1}]$, where $e = su$, given by the assignment $r/s^i \mapsto ru^i/e^i$ (the inverse $R$-isomorphism maps $r/e^i$ to $r(u^{-1})^i/s^i)$. Moreover, $e = e^2$ is an idempotent, so we have the ring (and $R$-module) isomorphisms $R[e^{-1}] \cong R/(1-e)R \cong eR$. 

It follows that each very flat module is projective, isomorphic to a direct sum of cyclic projective modules generated by idempotents in $R$.
In particular, locally very flat modules coincide with (flat) Mittag-Leffler modules.

By \cite[3.2]{G}, $\Spec R = \mSpec R$. Let $e \in R$ be an idempotent and $p \in \Spec R$. Then $eR \otimes_R R/p = 0$, iff $e \in p$, whence $\PSupp {eR} = \{ p \in \Spec R \mid e \notin p \} = D(e)$. In general, if $M = \bigoplus_{i \in I} e_i R$, then 
$\PSupp {M}$ equals the open set $\bigcup_{i \in I} D(e_i)$. 

If $N$ is a submodule in a projective module $M$, then each non-zero finitely generated submodule of $N$ is a direct summand in $M$ (and hence in $N$), isomorphic to a finite direct sum of the form $\bigoplus_{i < n} e_iR$ for some non-zero idempotents $e_i \in R$, cf.\ \cite{G}. Let $E$ be the set of all idempotents $e \in R$ occuring in this way. Then $p \in \PSupp N$, iff $p \in \bigcup_{e \in E} \PSupp eR = \bigcup_{e \in E} D(e)$. It follows that the P-support of \emph{each} non-zero submodule of a very flat module forms a non-empty open subset of $\Spec R$.              
\end{example}

\medskip
For the rest of this section, we will restrict ourselves to the noetherian setting. In the following series of lemmas, the possibilities of constructing non-very flat modules via localizations of the ring are established.

\begin{lemma}\label{finitespectrum}
Let $R$ be a noetherian ring. Then the spectrum of $R$ is finite iff the set $P_1$ is finite (and the Krull dimension of $R$ is at most $1$).
\end{lemma}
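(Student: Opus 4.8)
The plan is to split the equivalence into its two implications; the nontrivial content is the implication from finiteness of $\Spec R$ to $\Dim R \leq 1$. The easy direction is the following: if $P_1$ is finite and $\Dim R \leq 1$, then every prime ideal of $R$ has height $0$ or $1$, so $\Spec R = P_0 \cup P_1$; since $R$ is noetherian, $P_0$ (the set of minimal primes) is finite, and hence $\Spec R$ is finite. No difficulty arises here.

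For the converse, assume $\Spec R$ is finite. Then $P_1 \subseteq \Spec R$ is trivially finite, so it remains to bound the dimension. Suppose towards a contradiction that $R$ contains a chain of primes $p_0 \subsetneq p_1 \subsetneq p_2$, and pass to the noetherian domain $S = R/p_0$, which then satisfies $\Dim S \geq 2$. Here I would invoke the classical fact that a noetherian domain of Krull dimension at least $2$ has infinitely many primes of height $1$: if $q_1, \dots, q_n$ were all of them, then by prime avoidance one picks a nonzero element $x$ lying in some prime of $S$ of height $\geq 2$ (hence a non-unit) but outside $\bigcup_i q_i$; every minimal prime over $xS$ has height $\leq 1$ by Krull's principal ideal theorem and height $\geq 1$ since $S$ is a domain with $x \neq 0$, hence has height exactly $1$ and contains $x$, contradicting $x \notin \bigcup_i q_i$. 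As distinct height $1$ primes of $S$ pull back to distinct primes of $R$ properly containing $p_0$, this contradicts the finiteness of $\Spec R$; thus $\Dim R \leq 1$.

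The only real obstacle is this classical lemma on height $1$ primes of a noetherian domain of dimension $\geq 2$; everything else is routine bookkeeping with the splitting $\Spec R = P_0 \cup P_1$. Depending on what is taken as known, one might simply cite that lemma instead of reproducing the prime-avoidance argument above.
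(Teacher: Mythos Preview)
Your proof is correct and follows essentially the same route as the paper. The paper simply notes that $P_0$ is finite and cites \cite[Theorem 144]{K} (the fact that in a noetherian ring, if there is a prime strictly between two primes $p \subsetneq q$ then there are infinitely many), whereas you pass to the domain $R/p_0$ and spell out the classical prime-avoidance plus Krull principal ideal theorem argument---which is precisely how Kaplansky proves that theorem; your closing remark about possibly citing the lemma is exactly what the paper does.
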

\begin{proof}
Since the set $P_0$ is finite, the result follows directly from \cite[Theorem 144]{K}.
\end{proof}

\begin{lemma}\label{domainlike}
Let $R$ be a noetherian ring with infinite spectrum. Then there is $q_0 \in P_0$ such that the set
\[ Q_1 = \{p \in P_1 \mid \forall q \in P_0 : (q \subseteq p \Leftrightarrow q = q_0)\} \]
is infinite; moreover, there is an open set $W \subseteq \Spec R$ such that $W \cap P_0 = \{q_0\}$, $W \cap P_1 = Q_1$, and $W \subseteq \overline{\{q_0\}}$.
\end{lemma}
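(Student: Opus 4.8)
The plan is to exploit finiteness of $P_0$ together with the assumption that $\Spec R$ is infinite. By Lemma \ref{finitespectrum}, an infinite spectrum forces $P_1$ to be infinite (the Krull dimension being $\geq 1$ and all primes of height $\geq 2$ being accounted for together with $P_0$ and $P_1$ — more precisely, once $P_1$ is infinite we already have what we want, and if $P_1$ were finite then $\Spec R$ would be finite). So first I would record that $P_0$ is finite (as $R$ is noetherian, there are only finitely many minimal primes) and $P_1$ is infinite. For each $p \in P_1$, the set of minimal primes contained in $p$ is a nonempty subset of the finite set $P_0$. Partitioning $P_1$ according to this subset, and using that there are only finitely many subsets of $P_0$, the pigeonhole principle yields a single subset $S \subseteq P_0$ such that infinitely many $p \in P_1$ satisfy $\{q \in P_0 \mid q \subseteq p\} = S$. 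Since each such $p$ has height $1$, it is minimal over at least one $q \in S$; shrinking $S$ if necessary and applying pigeonhole once more, I may assume $S = \{q_0\}$ is a singleton, and that $Q_1 := \{p \in P_1 \mid \forall q \in P_0\colon (q \subseteq p \Leftrightarrow q = q_0)\}$ is infinite. This establishes the first assertion.

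For the second assertion, I want an open set $W$ isolating $q_0$ among the minimal primes while retaining all of $Q_1$, and contained in $\overline{\{q_0\}} = V(q_0)$. The natural candidate is a basic open set $D(f)$ for a suitable $f$, intersected with $V(q_0)$; but $V(q_0)$ is closed, not open, so instead I would work inside the closed subscheme $\Spec(R/q_0)$. Concretely: the minimal primes $q \in P_0 \setminus \{q_0\}$ are finitely many, and none of them contains $q_0$ and also none is contained in $q_0$ (distinct minimal primes are incomparable), so for each such $q$ there is $f_q \in q_0 \setminus q$. Hmm — that gives $f_q \notin q$ but I also need the $D(f_q)$ to contain $Q_1$. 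Since every $p \in Q_1$ contains $q_0 \ni f_q$, it would lie \emph{outside} $D(f_q)$, the wrong way. So instead I pick $g_q \in q \setminus q_0$ for each $q \in P_0\setminus\{q_0\}$ (possible since $q \not\subseteq q_0$), set $g = \prod_q g_q$, and take $W = D(g)$. Then $g \notin q_0$ so $q_0 \in W$; for $q \neq q_0$ in $P_0$ we have $g_q \in q$ hence $g \in q$ so $q \notin W$, giving $W \cap P_0 = \{q_0\}$. For $p \in Q_1$: the only minimal prime under $p$ is $q_0$, and $g \notin q_0$; since $p$ is minimal over $q_0$... but that does not immediately give $g \notin p$. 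The fix is to choose $g_q \in q \setminus p$ uniformly — impossible as stated since $p$ ranges over the infinite set $Q_1$. The genuinely correct choice is $g_q \in q$ with $g_q$ avoiding the generic point, i.e. $g_q \notin q_0$; then for $p \in Q_1$, if $g_q \in p$ then $q_0 \subsetneq q_0 + g_q R \subseteq p$, contradicting $\operatorname{ht} p = 1$ and $q_0 \subsetneq p$ — wait, that needs $g_q \notin q_0$, which we have, so $q_0 R_p \subsetneq (q_0 + g_q R)R_p \subseteq pR_p$ would force $\operatorname{ht} p \geq 2$ unless... actually $q_0 \subseteq p$ and $g_q \in p \setminus q_0$ gives height $\geq 2$? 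No: only if $q_0 R_p$ has height $\geq 1$, which it does since $q_0 \subsetneq p$ would need... I would need to argue carefully that $p$ has height exactly $1$ and the only prime strictly below it is... not necessarily $q_0$ alone. This is the delicate point.

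\textbf{The main obstacle} is precisely this last step: producing the open set $W$ with $W \cap P_1 = Q_1$ (equality, not merely $\supseteq$) while $W \subseteq \overline{\{q_0\}}$. The containment $W \subseteq V(q_0)$ is easy if I intersect with $V(q_0)$ and then find the open set inside the \emph{irreducible} scheme $\Spec(R/q_0)$, whose generic point is $q_0$ and all of whose nonempty open sets contain $q_0$ and are irreducible of the same dimension; there $Q_1$ becomes the set of height-one primes not containing any embedded or lower-dimensional component issue, and one deletes the finitely many "bad" height-one primes of $\Spec(R/q_0)$ — namely those $p/q_0$ that contain some other minimal prime of $R$ — by a single basic open set, using noetherianness to see there are only finitely many such (each such $p$ is minimal over $q_0 + q$ for some $q \in P_0 \setminus \{q_0\}$, and there are finitely many $q$, each contributing finitely many minimal primes over $q_0 + q$). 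Then pull back along $R \to R/q_0$. I would carry out the argument in $R/q_0$ from the start: replace $R$ by $R/q_0$ at the cost of tracking how $P_0, P_1$ change, or more cleanly keep $R$ but let $W$ be the preimage in $\Spec R$ of a basic open $D(\bar g) \subseteq \Spec(R/q_0)$ chosen to avoid the finitely many primes of $\Spec(R/q_0)$ that are minimal over the image of some $q \neq q_0$; then $W \subseteq V(q_0) = \overline{\{q_0\}}$ automatically, $W \cap P_0 = \{q_0\}$ since $q_0$ is the generic point of $V(q_0)$ and $\bar g \neq 0$, and $W \cap P_1 = Q_1$ by the choice of $\bar g$ together with the observation that a height-one prime $p \supseteq q_0$ lies in $Q_1$ exactly when $p/q_0$ avoids all those finitely many bad primes. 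That reduces everything to a finiteness-of-minimal-primes count in a noetherian ring, which is routine.
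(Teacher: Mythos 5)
Your plan for the first assertion (partitioning $P_1$ by $S(p) = \{q \in P_0 \mid q \subseteq p\}$ and applying the pigeonhole principle) starts off correctly, but the step ``shrinking $S$ if necessary and applying pigeonhole once more'' is a genuine gap. If the infinite class found by pigeonhole corresponds to a set $S$ with $|S| \geq 2$, then \emph{none} of its primes lie in $Q_1$ for any choice of $q_0$, and passing to a singleton subset of $S$ produces an entirely different (possibly empty) class, so you cannot simply ``shrink''. What is missing is the observation that every class with $|S| \geq 2$ is \emph{finite}: if $q_1 \neq q_2 \in S$, then each $p$ in that class is a height-one prime containing $q_1 + q_2$, hence one of the finitely many minimal primes over $q_1 + q_2$. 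Since $P_1$ is infinite by Lemma \ref{finitespectrum} and there are only finitely many classes, some singleton class is infinite, which yields $q_0$. Once patched, this is a valid direct argument; the paper instead argues by contradiction, assuming every singleton class $T_q$ is finite, deducing that $\{q\}$ is a principal open $D(t_q)$ for each $q \in P_0$, and then observing that $I = \sum_{q \in P_0} t_q R$ has infinitely many minimal primes. Both routes rest on the same noetherian finiteness, so yours is an acceptable alternative once completed.

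Your plan for the second assertion has a deeper flaw. The set you end up proposing --- the image of a basic open $D(\bar g) \subseteq \Spec{R/q_0}$ under the closed embedding $\Spec{R/q_0} \hookrightarrow \Spec R$ --- is only locally closed in $\Spec R$, not open (there is no ``preimage'' going the other way along $R \to R/q_0$). Moreover, even setting topology aside, no choice of a single element $\bar g$ can give $W \cap P_1 = Q_1$ in general: over $R = k[x,y,z]/(xy,xz)$ with $q_0 = (x)$, one has $R/q_0 = k[y,z]$, and $Q_1$ is the set of primes $p \supseteq (x)$ with $p/(x)$ generated by an irreducible of $k[y,z]$; any nonzero non-unit $\bar g \in k[y,z]$ has an irreducible factor, whose associated prime in $Q_1$ is then excluded from $D(\bar g)$. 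The correct and shorter choice is the one the paper makes: $W = \Spec R \setminus \overline{P_0 \setminus \{q_0\}}$. This is open; it lies in $\overline{\{q_0\}}$ because any prime avoiding all $q \in P_0 \setminus \{q_0\}$ must contain $q_0$ (every prime contains a minimal prime); and it meets $P_0$ in $\{q_0\}$ and $P_1$ in exactly $Q_1$ by the same reasoning. Equivalently $W = D(J)$ with $J = \bigcap_{q \in P_0 \setminus \{q_0\}} q$, which is generally \emph{not} a principal open set --- precisely why your attempt to isolate $q_0$ with one element $\bar g$ cannot work.
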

\begin{proof}
By Lemma \ref{finitespectrum}, the set $P_1$ is infinite. Suppose for the sake of contradiction, that none of the height-zero primes of $R$ fulfils the condition from the statement, i.e.\ for each $q \in P_0$, the set $T_q = P_1 \setminus \overline{P_0 \setminus \{q\}}$ is finite. This implies that $\{q\} = \Spec R \setminus \overline{T_q \cup (P_0 \setminus \{q\})}$ is an open set, and clearly a principal one. Therefore, there is $t_q \in R$ such that for each $p \in \Spec R$, $t_q \in p$ iff $p \neq q$. However, the ideal $I = \sum_{q \in P_0} t_q R$ is contained in each height-one prime, but in no height-zero prime, implying that there are inifinitely many minimal primes over $I$, a contradiction.

For the final claim, it suffices to put $W = \Spec R \setminus \overline{P_0 \setminus \{q\}}$.
\end{proof}

\begin{lemma}\label{infclosure}
Let $R$ be a noetherian ring with infinite spectrum and $q_0$, $Q_1$, $W$ as in Lemma \ref{domainlike}. Then the (Zariski) closure of any infinite subset of $Q_1$ contains $q_0$ (and consequently, the whole set $W$). In particular, the one-element set $\{q_0\}$ is not open.
\end{lemma}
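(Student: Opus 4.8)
The plan is to argue by contradiction, reducing the statement to the standard fact that an ideal of a noetherian ring has only finitely many minimal primes over it. Fix an infinite subset $S \subseteq Q_1$; the goal is $q_0 \in \overline S$. Recall that in the Zariski topology $\overline S = V(J)$ with $J = \bigcap_{p \in S} p$, so $q_0 \notin \overline S$ would mean $J \not\subseteq q_0$; I would then pick an element $f \in R$ lying in every $p \in S$ but not in $q_0$.

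Next I would use the property, built into the construction of $q_0$ and $W$ in Lemma~\ref{domainlike}, that $Q_1 \subseteq W \subseteq \overline{\{q_0\}} = V(q_0)$, so that every $p \in S$ contains $q_0$; together with $f \in p$ this gives $p \in V(q_0 + fR)$. The crux is that each such $p$ is in fact a \emph{minimal} prime of $q_0 + fR$: choosing a minimal prime $\mathfrak p$ of $q_0 + fR$ with $\mathfrak p \subseteq p$, one has $q_0 \subsetneq \mathfrak p$ (the inclusion is strict since $f \in \mathfrak p \setminus q_0$), so a proper inclusion $\mathfrak p \subsetneq p$ would yield a chain $q_0 \subsetneq \mathfrak p \subsetneq p$ and force $\htt p \geq 2$, contradicting $p \in P_1$; hence $\mathfrak p = p$. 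Since $R$ is noetherian, $q_0 + fR$ has only finitely many minimal primes, so $S$ would be finite — a contradiction. This proves $q_0 \in \overline S$, whence $\overline{\{q_0\}} \subseteq \overline S$, and combining with $W \subseteq \overline{\{q_0\}}$ from Lemma~\ref{domainlike} gives $W \subseteq \overline S$.

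For the final assertion, I would apply this to $S = Q_1$, which is infinite by Lemma~\ref{domainlike}, obtaining $q_0 \in \overline{Q_1}$; on the other hand $q_0 \notin Q_1$ because $q_0 \in P_0$ and $Q_1 \subseteq P_1$ are disjoint. Thus $Q_1 \subseteq \Spec R \setminus \{q_0\}$ while $\overline{Q_1} \not\subseteq \Spec R \setminus \{q_0\}$, so $\Spec R \setminus \{q_0\}$ is not closed, i.e.\ $\{q_0\}$ is not open.

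I do not anticipate a real obstacle here: the only step requiring care is the minimal-prime argument, where one must ensure the chosen minimal prime $\mathfrak p$ of $q_0 + fR$ strictly contains $q_0$ — this is precisely the role of the element $f \notin q_0$ — so that the height hypothesis $\htt p = 1$ forces $\mathfrak p = p$. Everything else is routine bookkeeping with Zariski closures and with the noetherian finiteness of minimal primes.
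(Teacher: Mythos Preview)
Your proof is correct and rests on the same key fact as the paper's --- that a noetherian ring has only finitely many minimal primes over any ideal. The paper proceeds a little more directly: rather than arguing by contradiction with an auxiliary element $f$ and the ideal $q_0 + fR$, it works with $I = \bigcap_{p \in T} p$ itself, observes that $q_0 \subseteq I$, notes that among the finitely many minimal primes of $I$ there must be a height-zero one (else every $p \in T$ would itself be minimal over $I$), and concludes that this prime is $q_0$, giving $I = q_0$ and hence $\overline{T} = V(q_0)$ outright.
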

\begin{proof}
Let $T \subseteq Q_1$ be infinite. Then the closure of $T$ are precisely those primes containing the ideal $I = \bigcap_{p \in T} p$. Since $R$ is noetherian, there are only finitely many minimal primes over $I$, so there have to be some height-zero ones among them. However, since $q_0 \subseteq I$, we see that $q_0$ is the only possible height-zero prime ideal over $I$. Therefore $I = q_0$ and the assertion follows.
\end{proof}

There is more to say for noetherian domains:

\begin{lemma}\label{gdomain}
Let $R$ be a noetherian domain. Then the following is equivalent:
\begin{enumerate}
\item[(i)]  The spectrum of $R$ is finite (and the Krull dimension of $R$ is at most $1$).
\item[(ii)]  $Q = R[s^{-1}]$ for some $s \in R$.
\item[(iii)]  Each flat module is very flat.
\end{enumerate}
\end{lemma}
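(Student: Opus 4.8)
The plan is to prove the chain of implications (iii)$\Rightarrow$(ii)$\Rightarrow$(i) and then (i)$\Rightarrow$(iii), the last being by far the hardest. For (iii)$\Rightarrow$(ii): the module $Q$ is flat, being a localization of $R$, hence very flat by (iii), so $Q = R[s^{-1}]$ for some $0 \neq s \in R$ by Lemma~\ref{general}(iii). For (ii)$\Rightarrow$(i): write $Q = R[s^{-1}]$; if $s$ is a unit then $R = Q$ is a field and $\Spec R$ is a point, so assume $0 \neq s$ is a non-unit. Since $\Spec R[s^{-1}]$ consists of the primes of $R$ not containing $s$ and equals $\{0\}$ (as $R[s^{-1}] = Q$ is a field), every non-zero prime of $R$ contains $s$; hence every height-one prime contains $sR$ and, being of height one, is a \emph{minimal} prime over $sR$, so $P_1$ is finite by noetherianity. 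Moreover $\Dim R \le 1$, since a prime of height $\ge 2$ would, by a standard prime-avoidance argument together with Krull's principal ideal theorem, contain infinitely many height-one primes, contradicting the finiteness of $P_1$. Lemma~\ref{finitespectrum} now yields that $\Spec R$ is finite.

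For the remaining direction (i)$\Rightarrow$(iii): by Lemma~\ref{finitespectrum}, $R$ is a semilocal noetherian domain of Krull dimension $\le 1$; write $\m_1,\dots,\m_n$ for its maximal ideals. The first step is a ring-theoretic remark: $\Spec R$ is a finite poset (a ``claw'' with least element $0$), so every down-closed subset is a basic open $D(f)$, and a short prime-avoidance argument then shows that \emph{every} localization $S^{-1}R$ of $R$ is of the form $R[f^{-1}]$ for a suitable $f$; in particular each $R_{\m_i}$, and $Q$ itself, lies in $\L$ and is very flat. By Lemma~\ref{morep}, $\VF$ coincides with the class of modules filtered by countably presented very flat modules, while the class of all flat modules is deconstructible --- every flat module is filtered by countably presented flat modules (see \cite{GT}). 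So it is enough to show that every \emph{countably presented} flat module $M$ is very flat, and for that it suffices, by Lemma~\ref{morep} (with $R$ noetherian), to produce an $\L$-filtration of $M$. One does this by recursion on the rank $r(M) \le \aleph_0$: if $M$ is divisible, i.e.\ $M = M \otimes_R Q$, then $M \cong Q^{(\kappa)}$ with $Q \in \L$ and we are done; otherwise one splits off a pure flat submodule $M_1$ of rank one, isomorphic to a localization of $R$ and hence to a member of $\L$, so that $M/M_1$ is again countably presented and flat of rank $r(M)-1$, and continues. When $r(M)=\aleph_0$ the countably many steps are organized using the Hill Lemma~\ref{hill}, and Lemma~\ref{general} together with additivity of the reduced rank keeps the bookkeeping finite at each stage.

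The clear obstacle is this last direction. Its heart is the finite-rank base case: showing that a countably presented flat module $M$ with $r(M)=t<\infty$ over our semilocal $R$ admits an $\L$-filtration --- equivalently, is very flat. Finiteness of $\Spec R$ enters here precisely because $Q = R[s^{-1}] \in \L$, so that $M \otimes_R R[s^{-1}] = M \otimes_R Q$ is a finitely generated projective $R[s^{-1}]$-module of rank $t$; combined with (a converse to) Lemma~\ref{general}(ii) this forces $M$ to be very flat. The accompanying technical nuisance --- and the point at which one genuinely needs $R$ to be semilocal of dimension $\le 1$ rather than an arbitrary domain --- is to verify at each peeling step that the rank-one submodule split off is \emph{pure}, so that the quotient remains flat and the recursion can proceed.
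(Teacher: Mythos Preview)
Your arguments for (iii)$\Rightarrow$(ii) and (ii)$\Rightarrow$(i) are fine and essentially match the paper (the paper simply cites Kaplansky's characterization of G-domains for (i)$\Leftrightarrow$(ii), but your direct argument via minimal primes over $sR$ and Lemma~\ref{finitespectrum} is equivalent).

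The gap is in (i)$\Rightarrow$(iii). Your ``converse to Lemma~\ref{general}(ii)'' is doing no work here: since $R[s^{-1}]=Q$ is a field, the hypothesis $M\otimes_R R[s^{-1}]$ is finitely generated projective is \emph{automatic} for any finite-rank $M$, so invoking such a converse amounts to asserting outright that every finite-rank torsion-free module is very flat --- precisely what you are trying to prove. (The paper establishes this converse only for Dedekind domains in Proposition~\ref{frank}(ii), and Example~\ref{nconverse} shows it can fail otherwise.) Similarly, the peeling step is not justified: a rank-one pure (flat) submodule of $M$ need not be isomorphic to a localization of $R$ when $R$ is a one-dimensional semilocal noetherian domain that is not Dedekind --- there are many rank-one torsion-free modules between $R$ and $Q$ that are not of the form $R[f^{-1}]$. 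So neither the base case nor the inductive step goes through as written.

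The paper avoids all of this with a two-line argument: a noetherian domain with finite spectrum has $R/I$ artinian (hence perfect) for every nonzero ideal $I$, so $R$ is an \emph{almost perfect domain} in the sense of \cite[7.55]{GT}; for such domains \cite[7.56]{GT} gives $^{\perp}(Q^{\perp})=\mathcal F$. Since $Q=R[s^{-1}]\in\mathcal L$, one has $\mathcal F={}^{\perp}(Q^{\perp})\subseteq{}^{\perp}(\mathcal L^{\perp})=\mathcal{VF}$, and the reverse inclusion is always true. This bypasses any need for explicit filtrations or rank-one splittings.
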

\begin{proof}
(i) is equivalent to (ii): This is \cite[Theorem 146]{K} - note that the domains satisfying (ii) appear under the name \emph{G-domains} in \cite{K}.
(In fact, the implication (ii) implies (i) follows directly from Lemmas \ref{ass} and \ref{finitespectrum}, because $P_1 \subseteq P = \Ass R{Q/R}$.) 

(i) together with (ii) imply (iii): We have that $R$ is an almost perfect domain in the sense of \cite[7.55]{GT}, whence $^\perp (Q^\perp) = \mathcal F$ (see e.g.\ \cite[7.56]{GT}). The fact that $Q = R[s^{-1}]$ then implies $\VF = \mathcal F$.

(iii) implies (ii): This follows from Lemma \ref{general}(iii).
\end{proof}

Now we are ready to determine the conditions for the class $\VF$ to be covering.

\begin{lemma}\label{VFcoverlemma}
Let $R$ be a noetherian ring with infinite spectrum. Then the class $\VF$ is not covering.
\end{lemma}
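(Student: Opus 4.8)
The plan is to exhibit a module $B$ which is a Bass module over $\mathcal C$, the set of countably presented very flat modules, such that $B$ has no $\VF$-precover, and then invoke the covering $\Rightarrow$ precovering direction. Actually, since $\VF$ is a covering class whenever it is precovering and every covering class here contains the projectives and is closed under extensions (Wakamatsu), it suffices to produce a module with no $\VF$-precover; by Lemma \ref{morep}, $\VF$ is exactly the class of $\mathcal C$-filtered modules, so I am in position to apply Lemma \ref{saroch}. Concretely, I would use the infinite set $Q_1 \subseteq P_1$ supplied by Lemmas \ref{domainlike} and \ref{infclosure}: pick a countable subset $\{p_i \mid i < \omega\} \subseteq Q_1$, choose elements $s_i \in p_{i+1} \cap \cdots$ cutting out the $p_j$'s one at a time, and form the direct limit $B$ of a chain $R \xrightarrow{t_0} R[u_0^{-1}] \xrightarrow{t_1} R[u_1^{-1}] \to \cdots$ of localizations at products of the $s_i$, where each bonding map is the canonical one; each term is very flat (it is $R[(\text{product})^{-1}] \in \mathcal L$, hence in $\VF$) and countably presented, so $B$ is a Bass module over $\mathcal C$.

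The heart of the argument is then to show $B$ is not a direct summand of any locally very flat — indeed of any very flat — module, which by Lemma \ref{saroch} gives that $B$ has no $\VF$-precover. For this I would compute $\PSupp B$: the P-support of the direct limit is the union (or rather the appropriate limit) of the P-supports of the terms, and by the choice of the $s_i$ the successive localizations kill more and more of the $p_j$, so that $q_0 \notin \PSupp B$ while every $p_j$ for $j$ large is in $\PSupp B$ — in fact $\PSupp B$ contains an infinite subset of $Q_1$ but not $q_0$. On the other hand, if $B$ were a direct summand of a very flat module $V$, then $\PSupp B$ would be a retract-image inside the open set $\PSupp V$ (Lemma \ref{psupp}); more usefully, $\PSupp B$ itself would then have to be open, being the P-support of a direct summand of a very flat module — but here I need the sharper fact that a direct summand of a very flat module whose P-support misses $q_0$ cannot contain an infinite subset of $Q_1$, because by Lemma \ref{infclosure} the closure of any infinite subset of $Q_1$ contains $q_0$, so the complement of such a set is never open, contradicting openness of $\PSupp B$. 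The one subtlety is that Lemma \ref{psupp} gives openness for very flat modules, not a priori for direct summands; here I would either note that direct summands of very flat modules need not be very flat, and instead argue directly that $\PSupp$ of a direct summand is the intersection of $\PSupp V$ with the support of the complementary idempotent-type data, or — cleaner — replace "direct summand of very flat" by invoking that locally very flat modules also have open P-support (which should follow by the dense-system argument, each countably presented piece being in $\mathcal A \subseteq \VF$ and P-support commuting with the relevant unions), so that in either formulation $\PSupp B$ is forced open and we reach the contradiction.

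The main obstacle I anticipate is precisely the bookkeeping with the $s_i$: one must choose them so that (a) each $R[u_i^{-1}]$ with $u_i = s_0 \cdots s_i$ is a nonzero localization (so the $s_i$ are not nilpotent and the chain does not collapse), (b) the canonical maps $R[u_i^{-1}] \to R[u_{i+1}^{-1}]$ are genuine and the limit $B$ is nonzero, and (c) $q_0 \notin \PSupp B$ while infinitely many $p_j \in Q_1$ survive in $\PSupp B$ — this last point uses $W \subseteq \overline{\{q_0\}}$ from Lemma \ref{domainlike} to control which primes each localization removes. Getting (c) to come out right is where the geometry of $W$, $q_0$, $Q_1$ really enters, and it is the step I would write out most carefully; the rest (Bass module structure, application of Lemma \ref{saroch}, and the openness contradiction via Lemma \ref{infclosure}) is then essentially formal.
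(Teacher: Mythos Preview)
There is a genuine gap in your strategy. The class $\VF$ is the left class of the complete cotorsion pair $(\VF,\CA)$ (Definition~\ref{veryf}), so $\VF$ is \emph{always} special precovering: every module has a $\VF$-precover. Your plan to ``produce a module with no $\VF$-precover'' therefore cannot succeed, and the reduction ``covering $\Rightarrow$ precovering'' has no content here. Relatedly, Lemma~\ref{saroch} does not say what you use it for: its hypothesis and conclusion concern the class $\mathcal A$ of \emph{locally} $\mathcal C$-free modules, which in our situation is $\LV$, not $\VF$. Even if your Bass module $B$ is not very flat, Lemma~\ref{saroch} only yields that $B$ has no $\LV$-precover. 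What you have sketched is essentially the proof of Lemma~\ref{nprec}, not of Lemma~\ref{VFcoverlemma}.

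The paper's proof is genuinely different and exploits the \emph{cover} (minimality) condition, not merely precovering. One assumes a $\VF$-cover $f\colon V\to B$ of $B=R_{q_0}$. For each $s\notin q_0$ the localization map $l_s\colon V\to V\otimes_R R[s^{-1}]$ factors the target $B$, and the cover property forces $l_s$ to be a split monomorphism with essential image, hence an isomorphism. Passing to the direct limit over all such $s$ gives $V\cong V\otimes_R B$, whence $\PSupp V\subseteq\PSupp B=\{q_0\}$; by Lemma~\ref{infclosure} this set is not open, contradicting Lemma~\ref{psupp}. The key step you are missing is this use of minimality to force $V$ to absorb all the localizations; no Bass-module or Lemma~\ref{saroch} argument can replace it, precisely because $\VF$-precovers always exist. (Your P-support bookkeeping is also inverted: with $s_k\notin q_0$ one has $q_0\in D_{s_k}$ for every $k$, so $q_0$ survives in $\PSupp B$ while the $p_k$ are successively removed, not the other way around.)
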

\begin{proof}
Let $q_0$ be as in Lemma \ref{domainlike} and put $B = R_{q_0}$. Assume the existence of a $\VF$-cover $f\colon V \to B$. Pick $s \in R$ invertible in $B$, then $B \otimes_R R[s^{-1}] \cong B$, so we have a map $g_s\colon V \otimes_R R[s^{-1}] \to B$ together with the localization map $l_s\colon V \to V \otimes_R R[s^{-1}]$; clearly $f = g_s l_s$. Since $V \otimes_R R[s^{-1}]$ is a very flat module, by the (pre)covering property we have a map $b_s\colon V \otimes_R R[s^{-1}] \to V$ such that $g_s = f b_s$.

By the covering property, the map $b_s l_s$ is an automorphism of $V$, hence $l_s$ is a split inclusion. However, image of the localization map $l_s$ is essential in $V \otimes_R R[s^{-1}]$, thus in fact, $V \cong V \otimes_R R[s^{-1}]$. Since tensor product commutes with direct limits, by the above we have $V \cong V \otimes_R B$. However, by Lemma \ref{infclosure}, $\PSupp B = \{q_0\}$ is not an open set (nor it contains any non-empty open set), therefore $\PSupp V$ cannot be open and $V$ cannot be very flat in view of Lemma \ref{psupp}, a contradiction.
\end{proof}

\begin{theorem}\label{VFcoverdomain}
Let $R$ be a noetherian domain. Then the following is equivalent:
\begin{enumerate}
\item[(i)]  The class $\VF$ is covering.
\item[(ii)]  The spectrum of $R$ is finite (and the Krull dimension of $R$ is at most $1$).
\item[(iii)]  Each flat module is very flat.
\end{enumerate}
\end{theorem}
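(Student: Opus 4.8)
The plan is to observe that the substance of this theorem has already been deposited in the preceding lemmas, so that the proof reduces to closing a short cycle of implications $(i)\Rightarrow(ii)\Rightarrow(iii)\Rightarrow(i)$. Indeed, the equivalence $(ii)\Leftrightarrow(iii)$ is nothing but the equivalence of conditions $(i)$ and $(iii)$ of Lemma~\ref{gdomain} (recall that for a noetherian domain, $(ii)$ above is equivalent to ``$Q=R[s^{-1}]$ for some $s\in R$'', which is precisely condition $(ii)$ of that lemma). So the only genuinely new content is the link of $(i)$ with the other two conditions, and for this everything needed is already available.

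For $(i)\Rightarrow(ii)$ I would argue by contraposition: if the spectrum of $R$ is infinite, then $R$ is a noetherian ring with infinite spectrum, and Lemma~\ref{VFcoverlemma} applies verbatim (it requires no domain hypothesis) to conclude that $\VF$ is not covering. For $(ii)\Rightarrow(iii)$ one simply invokes Lemma~\ref{gdomain}. Finally, for $(iii)\Rightarrow(i)$: if each flat module is very flat, then $\VF$ coincides with the class $\mathcal F$ of all flat modules, which is covering over any ring by the Flat Cover Theorem \cite{BEE}; hence $\VF$ is covering. This completes the cycle.

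There is no real obstacle remaining at this stage: the hard work was done in building up the structural description of very flat modules in Section~\ref{sect1}, the ``G-domain'' characterization feeding Lemma~\ref{gdomain}, and above all the P-support argument of Lemma~\ref{VFcoverlemma} (which produces, from an infinite spectrum, a prime $q_0$ whose localization $R_{q_0}$ has non-open P-support and therefore cannot admit a very flat cover). The only point worth flagging is that Lemma~\ref{VFcoverlemma} is stated and proved for arbitrary noetherian rings with infinite spectrum, so it may be cited here directly; the domain hypothesis is used solely to bring Lemma~\ref{gdomain} into play and thus to identify $(ii)$ with $(iii)$. Accordingly the write-up should be just a few lines assembling these three implications.
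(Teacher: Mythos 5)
Your proof is correct and follows essentially the same route as the paper: contraposition via Lemma~\ref{VFcoverlemma} for $(i)\Rightarrow(ii)$, Lemma~\ref{gdomain} for $(ii)\Leftrightarrow(iii)$, and the Flat Cover Theorem for $(iii)\Rightarrow(i)$. Nothing to add.
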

\begin{proof}
By Lemma \ref{VFcoverlemma}, we have that $\Spec R$ is finite, the rest is Lemma \ref{gdomain} together with the fact that the class of all flat modules is always covering (see e.g.\ \cite[8.1]{GT}). 
\end{proof}

\section{Locally very flat modules over noetherian rings}

Flat Mittag-Leffler modules coincide with the $\aleph_1$-projective modules (see e.g.\ \cite[\S3]{GT}). Replacing the term projective by very flat in the definition of an $\aleph_1$-projective module, we obtain the notion of a locally very flat module:  

\begin{definition}\label{lveryf} A module $M$ is said to be \emph{locally very flat} provided that it is locally $\mathcal C$-free where $\mathcal C$ enotes the class of all countably presented very flat modules (see Definition \ref{locally}).  

Note that a countably generated module is locally very flat, iff it is very flat. The class of all locally very flat modules is denoted by $\mathcal{LV}$. Clearly, $\mathcal{LV}$ consists of flat modules, and it contains all flat Mittag-Leffler modules.  
\end{definition}

\begin{example}
\def\Z{\mathbb Z}
The \emph{Baer-Specker groups} $\Z^\kappa$ ($\kappa \geq \omega$) are well-known not to be free, but they are flat Mittag-Leffler (\cite[3.35]{GT}), hence locally very flat. To see that they are not very flat, we use the refined version of Quillen's small object argument from \cite[Theorem 2]{ET} to obtain a short exact sequence
\[ 0 \to \Z \to C \to V \to 0 \]
with $V$ very flat and $C$ contraadjusted, both of cardinality at most $2^\omega$. As $C$ is an extension of very flat groups, it is very flat; as such, it cannot be cotorsion, for this would imply (by \cite[5.3.28]{EJ}) that the (non-zero torsion-free) $\Z_{(p)}$-module of all $p$-adic integeres $\mathbb{J}_{p}$ is a direct summand in $C$ for some prime $p$, in contradiction with Lemma \ref{ass}. Now \cite[1.2(4)]{GT1} implies that $\Ext1\Z{\Z^\omega}{C} \neq 0$. It follows that no Baer-Specker group is very flat.
\end{example}
 
We will distinguish two cases in our study of the approximation properties of the class $\mathcal{LV}$, depending on whether the set $\Spec R$ is finite or not:

\begin{lemma}\label{nprec}
Let $R$ be a noetherian ring such that $\Spec R$ is infinite. Then the class $\mathcal{LV}$ is not precovering.
\end{lemma}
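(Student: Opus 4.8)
The natural strategy is to invoke Lemma~\ref{saroch}: to show that $\mathcal{LV}$ is not precovering, it suffices to exhibit a Bass module $B$ over $\mathcal{C}$ (the class of countably presented very flat modules) which is not a direct summand in any module from $\mathcal{LV}$. Since $\mathcal{LV}$ consists of flat modules, and since $\PSupp$ of a very flat module is open by Lemma~\ref{psupp}, it is enough to find $B$ whose $\PSupp$ is not open (indeed not even containing a nonempty open set) but which is nonetheless a countable direct limit of countably presented very flat modules; then, any module $M \in \mathcal{LV}$ having $B$ as a direct summand would need $\PSupp B \subseteq \PSupp M$, and after localizing/using the dense system of very flat submodules one contradicts openness. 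I would aim to realize $B$ as a localization-type module built from the data $q_0, Q_1, W$ of Lemma~\ref{domainlike}, in parallel with the proof of Lemma~\ref{VFcoverlemma}.

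**Key steps.** First I would pass to $q_0 \in P_0$ and the infinite set $Q_1 \subseteq P_1$ from Lemma~\ref{domainlike}, and consider $B$ of the form $R_{q_0}$ (or $R[s^{-1}]$ for a suitable sequence of $s$'s realizing a direct limit). The point is that $R_{q_0}$ is a countable direct limit $\varinjlim R[s_i^{-1}]$ for an appropriate countable chain of elements $s_i$ avoiding $q_0$; each $R[s_i^{-1}]$ is very flat and countably presented, so $B = R_{q_0}$ is a Bass module over $\mathcal{C}$. Second, by Lemma~\ref{infclosure}, $\PSupp B = \{q_0\}$ fails to be open and contains no nonempty open set. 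Third, suppose for contradiction $B$ were a direct summand in some $M \in \mathcal{LV}$; write $B \oplus B' = M$ and take a dense system $\mathcal{S}$ of countably presented very flat submodules of $M$. Choose $N \in \mathcal{S}$ containing a countable generating set of the copy of $B$; enlarging within the dense system and using that $B$ is the image of an idempotent endomorphism, one can arrange a countably presented very flat submodule through which $B$ is "trapped", forcing $\PSupp B$ to be constrained by $\PSupp N$, an open set — contradiction with $\PSupp B$ containing no nonempty open set. Alternatively, and perhaps more cleanly, I would argue directly that any flat module containing $B$ as a pure submodule (a direct summand is pure) has $B \otimes_R k(q_0) \hookrightarrow M \otimes_R k(q_0)$, while the dense-system/openness argument shows $\PSupp M$ is open, and since $q_0 \in \PSupp M$ and $\PSupp M$ open, $\PSupp M$ would contain a neighborhood of $q_0$, which by Lemma~\ref{infclosure} meets $Q_1$ in an infinite set whose closure is all of $W$; one then derives a contradiction by localizing $M$ at $q_0$ and comparing ranks, using that $B_{q_0} = R_{q_0}$ has rank $1$ but the localization of a very flat submodule in the dense system has finite rank controlled in a way incompatible with $q_0$'s position.

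**Main obstacle.** The delicate point is step three: verifying that a direct summand relationship $B \mid M$ with $M$ locally very flat genuinely forces openness of $\PSupp B$, since $\PSupp$ is not obviously well-behaved on submodules or summands of infinitely generated flat modules. The cleanest route is likely the purity observation — a direct summand is pure, tensoring with $k(q_0)$ is exact on pure sequences in the relevant sense — combined with the dense system: every countable subset of $M$, in particular a generating set of $B$, sits inside some countably presented very flat $N \in \mathcal{S}$, and the retraction $M \to B$ restricted to $N$ expresses $B$ as a direct limit of images of very flat modules in a controlled way, so that $\PSupp B$ is contained in the union of the open sets $\PSupp N$ over $N \in \mathcal{S}$ below $B$, yet must also equal $\{q_0\}$. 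Pinning down that this union is still too small (i.e.\ that $\{q_0\}$ is not a union of $\PSupp$'s of very flat modules, which by Lemma~\ref{psupp} are open) is exactly where Lemma~\ref{infclosure} does the work. I expect the write-up to mirror the proof of Lemma~\ref{VFcoverlemma} closely, replacing the "cover" bookkeeping by the Bass-module machinery of Lemma~\ref{saroch}.
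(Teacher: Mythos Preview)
Your overall strategy---invoke Lemma~\ref{saroch} with a Bass module $B$ whose $\PSupp$ contains no nonempty open set---is exactly right, and your step three is essentially correct (see below). The genuine gap is in the construction of $B$. You assert that $R_{q_0}$ is a countable direct limit $\varinjlim R[s_i^{-1}]$ along a chain of elements $s_i \notin q_0$, but this fails in general: a countable direct limit of countably presented modules is itself countably presented, whereas $R_{q_0}$ need not even be countably generated as an $R$-module. For instance, take $R = \mathbb{C}[x]$ and $q_0 = (0)$; any countable family $\{s_i\} \subseteq R \setminus \{0\}$ has only countably many roots altogether, so the subring $R[s_0^{-1}, s_1^{-1}, \dots]$ omits $(x-a)^{-1}$ for uncountably many $a \in \mathbb{C}$ and is strictly smaller than $\mathbb{C}(x) = R_{q_0}$. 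Hence $R_{q_0}$ is not a Bass module here, and your parenthetical hedge ``or a suitable sequence of $s$'s'' is exactly where the missing work lies.

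The paper constructs such a sequence explicitly. One builds $s_0 \mid s_1 \mid s_2 \mid \cdots$ with each $s_k \notin q_0$, arranging at stage $k$ that some $p_k \in Q_1$ satisfies $s_k \notin p_k$ but $s_{k+1} \in p_k$; this is possible because $Q_1$ is infinite, only finitely many primes of $Q_1$ contain $s_k$, and one can take $s_{k+1} \in (p_k \cap s_k R) \setminus q_0$. Then $p_k \in D_{s_k} \setminus D_{s_{k+1}}$ for every $k$, so $\bigcap_k D_{s_k}$ misses the infinite set $\{p_k \mid k < \omega\} \subseteq Q_1$ and therefore has empty interior by Lemma~\ref{infclosure}. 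The module $B = \varinjlim_k R[s_k^{-1}]$ is now a genuine Bass module with $\PSupp B \subseteq \bigcap_k D_{s_k}$, so $B$ is not very flat by Lemma~\ref{psupp}. Note that $B$ is in general \emph{not} $R_{q_0}$, and $\PSupp B$ is in general larger than $\{q_0\}$.

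For step three, your approach (a) is correct but can be compressed to one line: $B$ is countably presented, so if $B$ were a direct summand of some $M \in \mathcal{LV}$, then the dense system yields a countably presented very flat $N \subseteq M$ with $B \subseteq N$, and the retraction $M \to B$ restricted to $N$ exhibits $B$ as a direct summand of $N$, hence $B$ is very flat---a contradiction. Your approach (b) and the discussion of purity, unions of $\PSupp N$'s, and ranks at $q_0$ are unnecessary detours.
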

\begin{proof} Since $\mathcal{LV}$ coincides with the class of all locally $\mathcal C$-free modules where $\mathcal C$ is the class of all countably presented very flat modules, $\mathcal{LV}$ fits the setting of Lemma \ref{saroch}. In view of that Lemma, it suffices to construct the appropriate Bass module~$B$.

Our goal is to construct $B$ as a direct limit of the direct system of the form
\[ R[s_0^{-1}] \to R[s_1^{-1}] \to \cdots \to R[s_k^{-1}] \to \cdots. \]
For $s \in R$, denote
\[ D_s = \{p \in \Spec R \mid s \notin p\} \]
the principal open set determined by $s$.
Let $q_0$, $Q_1$, $W$ as in Lemma \ref{domainlike}. We will construct the sequence $(s_k \mid k < \omega)$ such that $s_k \mid s_{k+1}$ and $s_k \notin q_0$ for $k < \omega$. First, let $s_0 \in R \setminus q_0$ be such that $D_{s_0}$ is a non-empty open subset of $W$. Assume that we have constructed $s_0, s_1, \dots, s_k$; since $s_k \notin q_0$, each $p \in Q_1$ such that $s_k \in p$ is a minimal prime over $s_k R$, therefore there are only finitely many such primes. Since $Q_1$ is infinite, we may pick $p_k \in Q_1$ such that $s_k \notin p_k$.
Finally, as $p_k \not\subseteq q_0$ and $s_k R \not\subseteq q_0$, we have $p_k \cap s_k R \not\subseteq q_0$ and we pick $s_{k+1} \in (p_k \cap s_k R) \setminus q_0$.

By construction, $p_k \in D_{s_k} \setminus D_{s_{k+1}}$, so by Lemma \ref{infclosure}, the interior of $\bigcap_{k<\omega} D_{s_k}$ is empty.
Since
\[ \PSupp B \subseteq \bigcap_{k<\omega} D_{s_k}, \]
we see that $B$ is not very flat by Lemma \ref{psupp} as desired.
\end{proof}

\begin{theorem}\label{char} Let $R$ be a noetherian domain. Then the following are equivalent:
\begin{enumerate}
\item The class $\LV$ is (pre)covering.
\item The spectrum of $R$ is finite and the Krull dimension of $R$ is at most $1$.
\item $\VF = \LV = \mathcal F_0$.
\end{enumerate}
\end{theorem}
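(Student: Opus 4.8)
The plan is to prove the cycle of implications (2) $\Rightarrow$ (3) $\Rightarrow$ (1) $\Rightarrow$ (2), where $\mathcal F_0$ denotes the class of all flat modules of projective dimension $\leq 1$ — although since $R$ is a noetherian domain, condition (2) will force every flat module to have projective dimension $\leq 1$, so $\mathcal F_0$ will in fact coincide with $\mathcal F$ under (2). The implication (2) $\Rightarrow$ (3) is the heart of the matter and I would deduce it from Theorem \ref{VFcoverdomain} and Lemma \ref{gdomain}: assuming $\Spec R$ is finite of Krull dimension $\leq 1$, Lemma \ref{gdomain} gives $\VF = \mathcal F$, and since every flat module over such a ring has projective dimension $\leq 1$ (as $R$ is then an almost perfect domain in the sense of \cite[7.55]{GT}, so $Q = R[s^{-1}]$ has projective dimension $1$ and $\mathcal F$ has bounded projective dimension), we get $\mathcal F = \mathcal F_0$. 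It remains to see $\LV = \VF$ in this case: since $\VF = \mathcal F$ is closed under direct limits and contains all countably presented very flat modules, any locally $\mathcal C$-free module is flat, hence very flat; conversely $\VF \subseteq \LV$ always, by Definition \ref{lveryf}.

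For (3) $\Rightarrow$ (1), I would argue that $\LV = \VF = \mathcal F_0 = \mathcal F$ is closed under direct limits (being the class of all flat modules) and, by Lemma \ref{morep} combined with Theorem \ref{VFcoverdomain}, is the class of all $\mathcal C$-filtered modules for $\mathcal C$ the set of countably presented very flat modules; alternatively, one simply invokes that the class of all flat modules is always covering (see \cite[8.1]{GT}), so under (3) the class $\LV$ is covering, in particular precovering. The implication (1) $\Rightarrow$ (2) is the genuinely new content: assuming $\LV$ is precovering, I must rule out $\Spec R$ being infinite. This is exactly Lemma \ref{nprec}, which already establishes that for a noetherian ring with infinite spectrum, $\LV$ is not precovering. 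Since a noetherian domain with finite spectrum automatically has Krull dimension $\leq 1$ (by Lemma \ref{finitespectrum}, or because $P_0 = \{0\}$ and finiteness of $\Spec R$ forces $P_1$ finite and no higher primes), the contrapositive of Lemma \ref{nprec} gives precisely (2).

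So the proof is essentially a bookkeeping assembly: (2) $\Rightarrow$ (3) via Lemma \ref{gdomain} plus the almost-perfect-domain observation that pins down $\mathcal F = \mathcal F_0$ and the easy direction $\VF = \LV$ when $\VF$ is closed under direct limits; (3) $\Rightarrow$ (1) via the Flat Cover Theorem; (1) $\Rightarrow$ (2) via Lemma \ref{nprec} (the Bass-module obstruction of \v Saroch). The main obstacle — and the only place where real work happens — is Lemma \ref{nprec} itself, i.e.\ the construction of a Bass module $B$ over $\mathcal C$ (a direct limit of localizations $R[s_k^{-1}]$ along a divisibility chain $s_k \mid s_{k+1}$, $s_k \notin q_0$) whose P-support has empty interior, so that $B$ is not very flat and hence, by Lemma \ref{saroch}, has no $\LV$-precover; but that lemma is stated and proved earlier in the excerpt, so here it can be cited. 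A minor subtlety to get right in writing is the precise definition of $\mathcal F_0$ and verifying that under (2) every flat $R$-module really does have projective dimension at most $1$ — this follows because $R$ is then almost perfect, equivalently the weak global dimension considerations of \cite{GT} apply, giving $\mathcal F_0 = \mathcal F$; I would state this as a one-line consequence of Lemma \ref{gdomain} and the classical structure theory of almost perfect domains rather than reprove it.
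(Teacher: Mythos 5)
Your proposal is correct and takes essentially the same route as the paper, which proves the cycle by noting that (1) $\Rightarrow$ (2) is the contrapositive of Lemma \ref{nprec}, while the remaining implications follow from Theorem \ref{VFcoverdomain} combined with the chain $\VF \subseteq \LV \subseteq \mathcal F$. One small notational point: in the Göbel--Trlifaj conventions the paper follows, $\mathcal F_0$ denotes the class of modules of flat (weak) dimension $\leq 0$, i.e.\ simply the flat modules, so your reading of $\mathcal F_0$ as ``flat of projective dimension $\leq 1$'' is not quite what is intended — though this does not disturb your argument, since you correctly observe that under (2) the two classes coincide and, with the standard reading, (3) $\Rightarrow$ (1) is immediate from the Flat Cover Theorem.
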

\begin{proof}
The implication (i) $\Rightarrow$ (ii) is Lemma \ref{nprec}, the rest is just Theorem \ref{VFcoverdomain} together with the fact that each locally very flat module is flat.
\end{proof}

\section{Very flat and locally very flat modules over Dedekind domains} 

In this section, we will restrict ourselves to the case when $R$ is a Dedekind domain. Then $R$ is hereditary, so the class $\mathcal{VF}$ is closed under submodules, and $\mathcal{VF}$ coincides with the class of all $\mathcal S$-filtered modules, where $\mathcal S$ denotes the set of all non-zero submodules of the modules in $\mathcal L$. Moreover, if $\Spec R$ is finite, then $R$ is a PID, see \cite[p.86]{M}.        
  
\begin{proposition}\label{frank} Assume that $R$ is a Dedekind domain. Let $M$ be a torsion-free module of rank $t$. 
\begin{itemize}
\item[(i)] If $t = 1$, then $M$ is very flat, iff $M$ is isomorphic to a module in $\mathcal S$. 
\item[(ii)] Assume that $t$ is finite. Then $M$ is very flat, iff there exists $0 \neq s \in R$ such that $M \otimes_R R[s^{-1}]$ is a projective $R[s^{-1}]$-module of rank $t$. 
\item[(iii)] Assume that $t$ is finite and let $0 \to M^\prime \to M \to M^{\prime \prime} \to 0$ be a pure exact sequence of modules. Then $M$ is very flat, iff both $M^\prime$ and $M^{\prime \prime}$ are very flat.
\item[(iv)] $M$ is very flat, iff $M$ possesses an $\mathcal S$-filtration of length $t$. 
\end{itemize}   
\end{proposition}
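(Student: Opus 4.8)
The plan is to prove the four parts more or less in the order stated, since each feeds into the next. For part (i): the forward direction is immediate, because $R$ is hereditary, so $\mathcal{VF}$ is closed under submodules, and every very flat module of rank $1$ is (by Lemma \ref{general}(iii) applied to a suitable localization, or directly by Lemma \ref{general}(i)) a submodule of some $R[s^{-1}]\in\mathcal L$, hence isomorphic to a member of $\mathcal S$. Conversely, every module in $\mathcal S$ is a submodule of a module in $\mathcal L$, and since $R$ is hereditary such a submodule of a very flat module is again very flat (alternatively, $\mathcal{VF}$ equals the class of $\mathcal S$-filtered modules by the remark opening the section, and a single module of $\mathcal S$ is trivially $\mathcal S$-filtered).

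For part (ii): the ``only if'' direction is exactly Lemma \ref{general}(ii) together with its last sentence (which gives rank $t$ for domains), since a Dedekind domain is noetherian with artinian classical quotient ring $Q$. For the ``if'' direction I would argue as follows. Suppose $M\otimes_R R[s^{-1}]$ is projective of rank $t$ over $R[s^{-1}]$. Since $R[s^{-1}]$ is itself a Dedekind domain, the projective module $M\otimes_R R[s^{-1}]$ is a direct sum of $t$ invertible ideals, hence is $\{$finitely generated ideals of $R[s^{-1}]\}$-filtered; each such ideal $I$, pulled back, is a finitely generated $R$-submodule of $R[s^{-1}]$, i.e.\ (after clearing the fixed denominator $s$) isomorphic to a finitely generated ideal of $R$, which is very flat since $R[s^{-1}]$ is a flat, indeed very flat, $R$-module and the ideal is a summand of $R[s^{-1}]$ up to... — more carefully: a finitely generated $R$-submodule $J$ of $R[s^{-1}]$ satisfies $J\subseteq s^{-k}R$ for some $k$, so $J\cong s^kJ\subseteq R$ is a finitely generated ideal, hence projective (as $R$ is Dedekind), hence very flat. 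Thus $M\otimes_R R[s^{-1}]\in\mathcal{VF}$. Now form the canonical exact sequence $0\to M\to M\otimes_R R[s^{-1}]\to C\to 0$ where $C$ is a direct limit of copies of $M$ under multiplication by $s$; since each such copy is torsion-free of rank $t$ and $M\otimes_R R[s^{-1}]$ is very flat, the plan is to show $C$ is very flat (it is $s$-divisible of bounded rank, built from $R[s^{-1}]$-modules) and then invoke part (iii) to conclude $M$ is very flat. Here one has to be slightly careful: I would instead present $M$ directly. Since $M\hookrightarrow M\otimes_R R[s^{-1}]$ and the latter is $\mathcal S$-filtered with consecutive factors submodules of members of $\mathcal L$, intersecting the filtration with $M$ (as in the proof of Lemma \ref{general}(i)) realizes $M$ as $\mathcal S$-filtered, hence very flat.

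For part (iii): ``if'' is clear since $\mathcal{VF}$ is closed under extensions. For ``only if'': if $M$ is very flat of finite rank then $M'$, being a submodule, is very flat because $R$ is hereditary; it remains to see $M''$ is very flat. By part (ii) pick $s$ with $M\otimes_R R[s^{-1}]$ projective of rank $t$; then tensoring the pure exact sequence (which stays exact, indeed stays pure) by $R[s^{-1}]$ gives a pure, hence split (projective middle term, finitely generated over the Dedekind ring $R[s^{-1}]$), exact sequence, so $M''\otimes_R R[s^{-1}]$ is a direct summand of a finitely generated projective $R[s^{-1}]$-module, hence projective of rank $\le t$; applying part (ii) to $M''$ (which is torsion-free of rank $\le t$ as a pure quotient) finishes it — one checks its rank $t''$ satisfies $M''\otimes_R R[s^{-1}]$ projective of rank $t''$, which is the hypothesis of (ii).

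For part (iv): ``if'' is trivial since $\mathcal S\subseteq\mathcal{VF}$ and $\mathcal{VF}$ is closed under transfinite extensions. For ``only if'', induct on $t$. The case $t=0$ gives $M=0$. For $t\ge 1$, use part (ii) and Lemma \ref{general}(i): the chain $0=M_0\subsetneq M_1\subsetneq\cdots\subsetneq M_t=M$ produced there has all factors $M_{i+1}/M_i$ isomorphic to submodules of members of $\mathcal L$, i.e.\ of rank $1$; by part (i) each such factor lies in $\mathcal S$. Thus that very chain is already an $\mathcal S$-filtration of length $t$. (If one prefers an explicit induction: $M_1\in\mathcal S$ by part (i), $M/M_1$ is very flat by part (iii) and torsion-free of rank $t-1$, so by induction it has an $\mathcal S$-filtration of length $t-1$; lifting and prepending $M_1$ gives an $\mathcal S$-filtration of $M$ of length $t$.)

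The main obstacle is the ``if'' direction of part (ii): one must pass from a projectivity statement over the localization $R[s^{-1}]$ back to very flatness over $R$, and the cleanest route is to realize $M$ as an $\mathcal S$-filtered module by intersecting an $\mathcal S$-filtration of $M\otimes_R R[s^{-1}]$ with $M$ — which requires knowing $M\otimes_R R[s^{-1}]$ is $\mathcal S$-filtered, i.e.\ that finitely generated ideals of $R[s^{-1}]$ pull back to modules in $\mathcal S$, the small denominator-clearing argument above. Everything else is a routine combination of hereditariness of $R$, closure of $\mathcal{VF}$ under submodules and extensions, and the structure theory of finitely generated modules over Dedekind domains.
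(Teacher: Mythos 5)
Parts (i)--(iii) are essentially correct and follow the paper's route, though part (ii) contains a misstep worth flagging: you describe an invertible ideal $I$ of $R[s^{-1}]$ as a ``finitely generated $R$-submodule of $R[s^{-1}]$'', which is false --- over $R$ such an $I$ is not finitely generated unless $s$ is a unit, so the denominator-clearing argument does not apply to it. The sound (and simpler) justification for $M \otimes_R R[s^{-1}] \in \VF$, and indeed for $M \in \VF$, is the paper's: $M \otimes_R R[s^{-1}]$ is a finitely generated projective $R[s^{-1}]$-module, hence a direct summand of $(R[s^{-1}])^n$ for some $n$, which is very flat over $R$; then $M \subseteq M \otimes_R R[s^{-1}] \subseteq (R[s^{-1}])^n$, and $\VF$ is closed under submodules since $R$ is hereditary. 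Your ``intersect the filtration'' closing argument does work once $M\otimes_R R[s^{-1}]\in\VF$ is granted, so this is a repairable flaw rather than a fatal one.

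The genuine gap is in part (iv). The Proposition imposes no finiteness hypothesis on $t$ in item (iv): a torsion-free $M$ of \emph{arbitrary} rank $t$ is very flat iff it admits an $\mathcal S$-filtration of length $t$. Your proof is an induction on $t$, which handles only finite $t$; even the countably infinite case is out of reach (your invocation of Lemma \ref{general}(i) already presupposes finite rank). The paper's proof has two steps you are missing: first, for countably presented $M$ (rank $\leq \aleph_0$) it filters via $M_n = M \cap Q^{(n)}$, using parts (i) and (iii) to place the rank-one factors in $\mathcal S$; second, for uncountable rank it takes a $\mathcal C$-filtration (where $\mathcal C$ is the countably presented very flat modules), applies the Hill Lemma \ref{hill} to reshape it into a filtration of length exactly $t$ with countably presented consecutive factors, argues via a cardinality count that $t$ equals the presentation cardinal, and then refines each factor by the countable case. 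The uncountable case is where the substance of (iv) lies, and it is entirely absent from your proposal.
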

\begin{proof} (i) If $M$ is very flat of rank $1$, then each $\mathcal S$-filtration of $M$ has length $1$, and the claim follows. 

(ii) The only if part is a particular instance of Lemma \ref{general}(ii). 

For the if part, note that $M \subseteq  M \otimes_R R[s^{-1}]$ as $R$-modules. By assumption, the latter is a projective $R[s^{-1}]$-module of finite rank, so it is finitely generated, hence a direct summand in $V = (R[s^{-1}])^n$ for some $n < \aleph_0$. Since $V$ is a very flat $R$-module, so is $M$. 

(iii) The if part holds because $\mathcal{VF}$ is closed under extensions. For the only-if part, we denote by $t$ the rank of $M$ and use (ii) to find $0 \neq s \in R$ such that $M \otimes_R R[s^{-1}]$ is a projective $R[s^{-1}]$-module of rank $t$. Localizing the original exact sequence at $R[s^{-1}]$, we obtain a pure-exact sequence of $R[s^{-1}]$-modules with a finitely generated projective middle term. The right hand term is a finitely generated flat, hence projective $R[s^{-1}]$-module, so the sequence splits, and (ii) yields the very flatness of both $M^\prime$ and $M^{\prime \prime}$.   

(iv) By the Eklof Lemma \cite[6.2]{GT}, each module possessing an $\mathcal S$-filtration is very flat. In order to prove the converse, let $\mathcal C$ denote the class of all countably presented very flat modules. We proceed in two steps: 

Step I. Assume that $M \in \mathcal C$, hence $t \leq \aleph_0$. We have $R^{(t)} \trianglelefteq M \trianglelefteq Q^{(t)}$. For each $n \leq t$, let $M_n = M \cap Q^{(n)}$. Then for each $n < t$, the module $S_n = M_{n + 1}/M_n$ is torsion-free of rank one, whence $M_n$ is a pure submodule of the finite rank very flat module $M_{n+1}$, for each $n < t$. By parts (i) and (iii), $S_n$ is isomorphic to an element of $\mathcal S$, so $M$ has an $\mathcal S$-filtration of length $t$. 

Step II: Let $M \in \mathcal{VF}$, $\lambda$ be the minimal cardinal such that $M$ is $\lambda$-presented, and assume that $\lambda > \aleph_0$. Let $\mathcal M$ be a $\mathcal C$-filtration of $M$ (see Lemma \ref{morep}). Let $\mathcal H$ be the family corresponding to $\mathcal M$ by Lemma \ref{hill} for $\kappa = \aleph_1$. Again, we have $R^{(t)} \trianglelefteq M \trianglelefteq Q^{(t)}$, and we let $\{ 1_\beta \mid \beta < t \}$ be the canonical free basis of $R^{(t)}$. Using the properties of the family $\mathcal H$, we can select from $\mathcal H$ by induction on $\beta$ a new $\mathcal C$-filtration $\mathcal M ^\prime = ( M^\prime_\beta \mid \beta \leq t \}$ such that $1_\beta \in M^\prime_{\beta + 1}$ for each $\beta < t$. Since $\mathcal H$ consists of pure submodules of $M$ and  $R^{(t)} \trianglelefteq M_t \trianglelefteq M$, we have $M_t = M$, so $\mathcal M ^\prime$ is a $\mathcal C$-filtration of $M$ of length $t$. Since $\mathcal C$ consists of countably presented modules, necessarily $t \geq \lambda$ (cf.\ \cite[Corollary 7.2.]{GT}). But clearly $t \leq \lambda$, so $t = \lambda$, and we can also assume that all the consecutive factors in $\mathcal M ^\prime$ are non-zero. Finally, by Step I, $0 \neq M_{\beta + 1}/M_\beta$ is countably $\mathcal S$-filtered for each $\beta < t$. Since the cardinal $t$ is uncountable, we can refine $\mathcal M ^\prime$ into an $\mathcal S$-filtration of $M$ of length $t$, q.e.d.                                            
\end{proof}

In the setting of Dedekind domains, the analogy between flat Mittag-Leffler modules and the locally very flat ones goes further: for example, Definition \ref{lveryf} can equivalently be formulated using pure submodules in $M$ (cf.\ \cite[3.14]{GT}), and one has the analog of Pontryagin's Criterion (in part (iii)): 

\begin{theorem}\label{variants} Let $R$ be a Dedekind domain and $M$ be a module. Then the following conditions are equivalent: 
\begin{itemize}
\item[(i)] For each finite subset $F$ of $M$, there exists a countably generated pure submodule $N$ of $M$ such that $N$ is very flat and contains $F$.
\item[(ii)] For each countable subset $C$ of $M$, there exists a countably generated pure submodule $N$ of $M$ such that $N$ is very flat and contains $C$.
\item[(iii)] Each finite rank submodule of $M$ is very flat.
\item[(iv)] Each countably generated submodule of $M$ is very flat.
\item[(v)] $M$ is locally very flat.
\end{itemize}
\end{theorem}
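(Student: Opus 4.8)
The plan is to prove the equivalences in the cycle $(v) \Rightarrow (ii) \Rightarrow (i) \Rightarrow (iii) \Rightarrow (iv) \Rightarrow (v)$, using the Hill Lemma and Proposition \ref{frank} as the main engines, while exploiting heavily that $R$ is hereditary (so $\mathcal{VF}$ is closed under submodules and pure submodules are just submodules with torsion-free quotient).

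First I would observe that $(v) \Rightarrow (ii)$ is essentially the definition of locally $\mathcal C$-free: if $M$ is locally very flat, a dense system of submodules from the class $\mathcal A$ of countably $\mathcal C$-filtered modules is available, so any countable subset $C$ sits inside some $N$ from this system; such $N$ is countably presented, hence very flat (by Lemma \ref{morep} and Definition \ref{lveryf}, a countably generated very flat module is countably presented), and one upgrades $N$ to a pure submodule — but over a Dedekind domain every submodule with torsion-free quotient is pure, and one can always enlarge a countably generated submodule of a torsion-free module to a countably generated pure one without leaving $\mathcal C$ using that $\mathcal{VF}$ is closed under submodules. The implications $(ii) \Rightarrow (i)$ and $(iii) \Rightarrow (iv)$ are trivial specializations (finite subsets are countable; countably generated submodules of a torsion-free module have at most countable rank, but more simply finite rank pure submodules exist cofinally — actually $(iii) \Rightarrow (iv)$ needs a small argument: a countably generated submodule $M'$ is a union of a countable chain of finite rank pure submodules $M'_n = M' \cap Q^{(n)}$, each very flat by $(iii)$, with $M'_{n+1}/M'_n$ torsion-free of finite rank, so one assembles an $\mathcal S$-filtration of $M'$ via Proposition \ref{frank}(iii),(iv) and concludes $M' \in \mathcal{VF}$ by the Eklof Lemma).

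The implication $(i) \Rightarrow (iii)$ is where Pontryagin's Criterion genuinely enters. Given a finite rank submodule $L \subseteq M$, I would pick a free submodule $F \subseteq L$ of the same rank $t$, let $x_1, \dots, x_t$ be a basis of $F$, apply $(i)$ to the finite set $\{x_1, \dots, x_t\}$ to get a countably generated very flat pure submodule $N \supseteq F$, and then note $L \cap N \supseteq F$ has rank $t$; but the real content is a Pontryagin-style induction on $t$ showing that $L$ itself is very flat. The mechanism: $L$ is torsion-free of finite rank, $L_{t-1} = L \cap Q^{(t-1)}$ is pure of rank $t-1$, so by induction $L_{t-1}$ is very flat; the quotient $L/L_{t-1}$ is torsion-free of rank one, and I must show it is very flat, i.e.\ isomorphic to a module in $\mathcal S$ (by Proposition \ref{frank}(i)), which over a Dedekind domain means it embeds in some $R[s^{-1}]$. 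This is exactly where condition $(i)$ is used: the rank-one quotient $L/L_{t-1}$ is realized inside the very flat module $N/(L_{t-1}\cap N)$ (after arranging the bases appropriately), and rank-one submodules of very flat modules over Dedekind domains are very flat since $\mathcal{VF}$ is closed under submodules. Then Proposition \ref{frank}(iii) applied to the pure exact sequence $0 \to L_{t-1} \to L \to L/L_{t-1} \to 0$ gives that $L$ is very flat.

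Finally $(iv) \Rightarrow (v)$: assuming every countably generated submodule of $M$ is very flat, I would exhibit a dense system. Take $\mathcal S_M$ to be the collection of all countably generated pure submodules of $M$; each such submodule is countably generated very flat, hence in $\mathcal C$ (indeed in the class $\mathcal A$ of countably $\mathcal C$-filtered modules, trivially), it is closed under unions of countable chains because a countable union of countably generated pure submodules is again countably generated and pure (purity over a Dedekind domain = torsion-free quotient, which is preserved under directed unions), and every countable subset of $M$ is contained in such a submodule since one can close off a countably generated submodule to a pure one within $M$ (the pure closure of a countably generated submodule of a torsion-free module is still countably generated over a noetherian domain of Krull dimension one — this uses that $Q^{(n)}/R^{(n)}$ is a torsion module and the relevant purification is obtained by adjoining countably many elements). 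The main obstacle I anticipate is the careful bookkeeping in $(i) \Rightarrow (iii)$: matching up the chosen basis elements with the submodule $N$ provided by condition $(i)$ so that the rank-one quotient actually lands inside $N$ as a submodule, rather than merely being a subquotient; this is the crux where the Pontryagin-type argument must be executed precisely, and it is the one step that truly needs the hypothesis on finite subsets rather than something weaker.
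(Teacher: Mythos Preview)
Your cycle $(v) \Rightarrow (ii) \Rightarrow (i) \Rightarrow (iii) \Rightarrow (iv) \Rightarrow (v)$ differs from the paper's $(i) \Rightarrow (ii) \Rightarrow (iii) \Rightarrow (iv) \Rightarrow (v) \Rightarrow (i)$, and the difference matters because you have misplaced the hard step.

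The implication $(i) \Rightarrow (iii)$ is in fact immediate, not the crux. Given a finite rank submodule $L \subseteq M$, pick a free $F \trianglelefteq L$ and apply (i) to a basis of $F$ to obtain a very flat pure submodule $N \supseteq F$. Since $N$ is pure in $M$ and $F$ is essential in $L$, purity of $N$ forces $L \subseteq N$; as $R$ is hereditary, $L$ is then very flat. No induction is needed, and the worry about ``matching up bases'' that you flag as the main obstacle never arises. (This is exactly the paper's argument for $(ii) \Rightarrow (iii)$.)

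The genuine obstacle is the purification step, which you invoke twice --- in $(v) \Rightarrow (ii)$ and again in $(iv) \Rightarrow (v)$ --- asserting that ``one can always enlarge a countably generated submodule of a torsion-free module to a countably generated pure one'' and that ``the pure closure of a countably generated submodule \dots\ is still countably generated over a noetherian domain of Krull dimension one.'' This is false as stated: take $R = k[x]$ for an uncountable field $k$ and $M = Q = k(x)$; then $R \subseteq M$ is cyclic but its pure closure is all of $Q$, which is not countably generated. The claim \emph{does} hold under hypothesis (iv), but establishing it is exactly where the Pontryagin content lives. The paper carries this out in its step $(v) \Rightarrow (i)$ (which really uses only (iv)): assuming a finite rank pure submodule $G$ is not countably generated, one locates a rank-one pure quotient $H$ with $R \subseteq H \subseteq Q$ and $\Ass R{H/R}$ uncountable, then builds a countably generated $H' \subseteq H$ with $\Ass R{H'/R}$ infinite; by (iv) this $H'$ is very flat, contradicting Lemma~\ref{ass}. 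Your sketch (``adjoining countably many elements'') does not supply this argument, and nothing else in your outline does.

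For $(iv) \Rightarrow (v)$ the issue is avoidable: the set of \emph{all} countably generated submodules of $M$ already forms a dense system of very flat modules, so purity is a red herring there and the paper's one-line proof suffices. But $(v) \Rightarrow (ii)$ genuinely demands a pure submodule, and closing your cycle still requires the missing argument via Lemma~\ref{ass}.
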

\begin{proof} (i) implies (ii): Let $C = \{ c_i \mid i < \omega \}$. By induction, we define a pure chain $\mathcal M  = ( M_i \mid i < \omega )$ of very flat submodules of $M$ of finite rank such that $\{ c_j \mid j < i \} \subseteq M_i$ for each $i < \omega$ as follows: $M_0 = 0$, and if $M_i$ is defined, then there is a finitely generated free submodule $G \trianglelefteq M_i +  c_iR$. By (i), there is also a countably generated pure submodule $D$ of $M$ such that $D$ is very flat and contains $G$. By Proposition \ref{frank}(iv), we can find a finite rank pure and very flat submodule $M_{i+1}$ of $D$ such that $G \subseteq M_{i+1}$, and hence also $M_i +  c_iR \subseteq M_{i+1}$. By Proposition \ref{frank}(iii), $M_{i+1}/M_i$ is very flat of finite rank, hence countably generated. Moreover, $\mathcal M$ is a $\mathcal{VF}$-filtration of $N = \bigcup_{i < \omega} M_i$. We conclude that $N$ is a countably generated very flat and pure submodule of $M$ containing the set $C$.       

(ii) implies (iii): Let $G$ be a finite rank submodule of $M$. Then $F \trianglelefteq G$ for a finitely generated free module $F$. By (ii), there is a countably generated very flat pure submodule $N$ of $M$ containing $F$. Then also $G \subseteq N$, whence $G$ is very flat.  

(iii) implies (iv): Let $C$ be a countably generated submodule of $M$ of countable rank. W.l.o.g., $R^{(\omega)} \trianglelefteq C \trianglelefteq Q^{(\omega)}$. 
For each $n < \omega$, let $C_n = C \cap Q^{(n)}$. By assumption, for each $n < \omega$, $C_n$ is a very flat pure submodule of $C$, whence $C_{n+1}/C_n$ is very flat by Proposition \ref{frank}(iii), and so is $C$. 

(iv) implies (v): If (iv) holds, then the set $\mathcal T$ of \emph{all} countably generated submodules of $M$ witnesses the local very flatness of $M$.
     
(v) implies (i): First, (v) clearly implies (iv), since each countably generated submodule of $M$ is contained in a (very flat) module from $\mathcal T$.  

In order to prove that (iv) implies (i), we let $F$ be a finite subset of $M$ and $G$ be a pure submodule of $M$ of finite rank, say $n$, such that $F \subseteq G$. Then  $R^{(n)} \trianglelefteq G \trianglelefteq Q^{(n)}$. It suffices to prove that $G$ is countably generated. 

If this is not the case, we let $G_i = G \cap Q^{(i)}$ for each $i \leq n$, and let $k < n$ be the largest index such that $G_k$ is countably generated (and hence very flat). Then $H = G_{k+1}/G_k$ is a torsion-free module of rank one, so w.l.o.g.\ $R \subseteq H \subseteq Q$, but $H$ is not countably generated. Hence $\Ass R{H/R}$ is uncountable. 

Let $\{ p_i \mid i < \omega \}$ be a set of distinct elements of $\Ass R{H/R}$. We can choose $g_0 \in G_{k+1}$ such that $g_0 + G_k = 1 \in R$, and for each $i < \omega$, $g_{i + 1} \in G_{k+1}$ such that $(\langle g_{i + 1} + G_k\rangle + \langle g_0 + G_k\rangle)/\langle g_0 + G_k\rangle = R/p_i \subseteq Q/R$. Let $G^\prime$ be the submodule of $G_{k+1}$ generated by $G_k \cup \{ g_i \mid i < \omega \}$. Since $G^\prime$ is countably generated, it is very flat, and so is its rank one pure-epimorphic image $H^\prime = G^\prime/G_k = \langle g_i + G_k \mid i < \omega \rangle$ (see Proposition \ref{frank}(iii)). By the definition of $H^\prime$, $R \subseteq H^\prime \subseteq H$, and $p_i \in \Ass R{H^\prime/R}$ for each $i < \omega$. So $\Ass R{H^\prime/R}$ is infinite, in contradiction with Lemma \ref{ass}.
\end{proof}

\section{Contraadjusted modules}\label{sect-CA}

\renewcommand{\Ext}{\operatorname{Ext}}
\renewcommand{\Hom}{\operatorname{Hom}}

Recall that a module $C$ is contraadjusted if $\Ext_R^1(R[s^{-1}], C) = 0$ for each $s \in R$.
This can be easily rephrased using the short exact sequence \eqref{present}:
\begin{lemma}\label{CAsystem}
A module $M$ is contraadjusted, if and only if for each $s \in R$ and for each sequence $( m_i \mid i < \omega )$ of elements of $M$, the countable system of linear equations with unknowns $x_i$
\begin{equation}\label{system}
x_i - sx_{i+1} = m_i \quad (i < \omega)
\end{equation}
has a solution in $M$.
\end{lemma}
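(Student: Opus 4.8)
The plan is to unwind the definition of contraadjustedness through the presentation \eqref{present} of $R[s^{-1}]$, using the standard description of $\Ext^1$ via the cokernel of an induced Hom-map. First I would recall that, by \eqref{present}, for every $s\in R$ there is a free presentation
\[
0 \to R^{(\omega)} \overset{g_s}\to R^{(\omega)} \to R[s^{-1}] \to 0,
\qquad g_s(1_i) = 1_i - s\cdot 1_{i+1},
\]
and since $R^{(\omega)}$ is free, applying $\Hom_R(-,M)$ yields the exact sequence
\[
\Hom_R(R^{(\omega)},M) \overset{\Hom(g_s,M)}\longrightarrow \Hom_R(R^{(\omega)},M) \to \Ext^1_R(R[s^{-1}],M) \to 0,
\]
so that $\Ext^1_R(R[s^{-1}],M)$ is the cokernel of $\Hom(g_s,M)$. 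Hence $\Ext^1_R(R[s^{-1}],M)=0$ if and only if $\Hom(g_s,M)$ is surjective.

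Next I would make the identification $\Hom_R(R^{(\omega)},M) \cong M^\omega$ explicit, sending a homomorphism $\varphi$ to the sequence $(\varphi(1_i)\mid i<\omega)$. Under this identification I need to compute the dual map $\Hom(g_s,M)$: a homomorphism $\psi\in\Hom_R(R^{(\omega)},M)$ is sent to $\psi\circ g_s$, whose $i$-th coordinate is $\psi(g_s(1_i)) = \psi(1_i) - s\,\psi(1_{i+1})$. So writing $x_i := \psi(1_i)$, the map $\Hom(g_s,M)$ sends $(x_i)_{i<\omega}$ to the sequence $(x_i - s x_{i+1})_{i<\omega}$. Therefore surjectivity of $\Hom(g_s,M)$ says precisely: for every $(m_i)_{i<\omega}\in M^\omega$ there is $(x_i)_{i<\omega}\in M^\omega$ with $x_i - s x_{i+1} = m_i$ for all $i<\omega$, i.e.\ the system \eqref{system} is solvable. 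Combining this equivalence with the one from the previous paragraph, and quantifying over all $s\in R$, gives exactly the statement. Note that only finitely many of the $m_i$ in a "genuine" element of the cokernel need be nonzero — but allowing arbitrary sequences $(m_i)$ is harmless, since one can also solve the system when almost all $m_i$ vanish is no easier than the general case; alternatively one observes that the surjectivity condition is literally about all of $M^\omega$, so no restriction on the $m_i$ is imposed.

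There is essentially no hard step here; the only mild subtlety is bookkeeping the direction of the maps under the contravariant functor $\Hom_R(-,M)$ and making sure the sign/shift in $g_s$ is transcribed correctly into the equation $x_i - s x_{i+1} = m_i$ rather than, say, $x_{i+1} - s x_i = m_i$. I would double-check this by noting $g_s(1_i) = 1_i - s\cdot 1_{i+1}$ as given in the proof of Lemma \ref{morep}, so the $1_i$-coefficient functional applied after $g_s$ picks up the $1_i$-term with coefficient $1$ and the (shifted) $1_{i-1}$-contribution — more carefully, the $i$-th coordinate of $\psi\circ g_s$ is obtained by evaluating $\psi$ on the $i$-th basis vector's image, which is $1_i - s\cdot 1_{i+1}$, giving $x_i - s x_{i+1}$ as claimed. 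That settles it.
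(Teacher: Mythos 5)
Your proof is correct and is essentially identical to the paper's: both apply $\Hom_R(-,M)$ to the presentation \eqref{present}, note that $\Ext^1_R(R[s^{-1}],M)=0$ iff $\Hom(g_s,M)$ is surjective, and identify that map under $\Hom_R(R^{(\omega)},M)\cong M^\omega$ with $(x_i)\mapsto(x_i-sx_{i+1})$. The aside about ``only finitely many $m_i$ nonzero'' is a non-issue that you correctly dismiss: since $R^{(\omega)}$ is a direct sum, $\Hom_R(R^{(\omega)},M)$ is the full product $M^\omega$, so the surjectivity condition genuinely ranges over all sequences.
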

\begin{proof}
Applying the contravariant functor $\Hom_R(-,M)$ to \eqref{present}, one sees that the condition $\Ext_R^1(R[s^{-1}], C) = 0$ is equivalent to the map $\Hom_R(g_s,M)$ being surjective. The latter condition easily translates into the solvability of the countable system \eqref{system}.
\end{proof}

\begin{example}
Let $R$ be a Dedekind domain. By \cite{N}, each reduced cotorsion module $C$ is isomorphic to the product $\prod_{p \in \mSpec R} C_p$, each $C_p$ being a module over the local ring $R_p$. Then $D = \bigoplus_{p \in \mSpec R} C_p$ is a contraadjusted module: To see it, pick $0 \neq s \in R$ and decompose $D$ as $D_1 \oplus D_2$, where
\[ D_1 = \bigoplus_{s \in p} C_p, \quad D_2 = \bigoplus_{s \notin p} C_p. \]
On one hand, since $D_1$ is a finite direct sum of cotorsion modules, it is cotorsion, so $\Ext_R^1(R[s^{-1}], D_1) = 0$. On the other hand, each summand in $D_2$ is $s$-divisible, so the system \eqref{system} has always a solution in $D_2$, and so $\Ext_R^1(R[s^{-1}], D_2) = 0$ and the assertion follows.

Finally, observe that $D$ is cotorsion only if $D \cong C$ (i.e.\ there are only finitely many non-zero summands $C_p$).
\end{example}

\begin{proposition}
If $R$ is a semiprime Goldie ring (e.g. a domain), then every divisible module (i.e.\ $sM = M$ for each non zero-divisor) is contraadjusted.
\end{proposition}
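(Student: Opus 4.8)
The plan is to verify the definition of contraadjustedness directly: I must show $\Ext^1_R(R[s^{-1}],M)=0$ for every $s\in R$, and by (the proof of) Lemma~\ref{CAsystem} this amounts, for a fixed $s$, to the solvability in $M$ of the system \eqref{system} for every sequence $(m_i\mid i<\omega)$ of elements of $M$. I will split into three cases according to the nature of $s$.

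If $s$ is a non-zero-divisor, then $sM=M$ by divisibility, and I solve \eqref{system} recursively: put $x_0=0$, and given $x_i$, note that $x_i-m_i\in M=sM$, so there is $x_{i+1}\in M$ with $sx_{i+1}=x_i-m_i$. This produces a solution for any right-hand side, so $\Ext^1_R(R[s^{-1}],M)=0$. If $s=0$ there is nothing to prove, since $R[s^{-1}]=0$.

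The substantive case is that of a nonzero zero-divisor $s$, and here I invoke the Goldie hypothesis. Since $R$ is commutative, semiprime and Goldie, its classical quotient ring $Q$ is a finite product of fields $\prod_{j=1}^{k}K_j$ (cf.\ \cite[Theorem~6.15]{GW}); the minimal primes $q_1,\dots,q_k$ of $R$ are the kernels of the projections $R\to K_j$, they are pairwise incomparable, $\bigcap_j q_j=0$, and $r\in R$ is regular precisely when its image in each $K_j$ is nonzero (equivalently, $r$ is a unit of $Q$). Put $A=\{j\mid s\notin q_j\}$; as $s\neq 0$ is a zero-divisor, $A$ is a nonempty proper subset of $\{1,\dots,k\}$. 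Since $\bigcap_{j\in A}q_j\not\subseteq q_i$ for $i\notin A$, prime avoidance yields $t\in\bigl(\bigcap_{j\in A}q_j\bigr)\setminus\bigl(\bigcup_{i\notin A}q_i\bigr)$; a componentwise check in $Q$ shows $st=0$ and that $u:=s+t$ is regular. In $R[u^{-1}]$ the ideals $sR[u^{-1}]$ and $tR[u^{-1}]$ are comaximal (their sum contains the unit $u$) and have zero product, so the Chinese Remainder Theorem gives $R[u^{-1}]\cong R[u^{-1}]/sR[u^{-1}]\times R[u^{-1}]/tR[u^{-1}]$; moreover a short universal-property argument identifies $R[u^{-1}]/tR[u^{-1}]$ with $R[s^{-1}]$ (inverting $s$ in $R$ automatically kills $t$, since $st=0$). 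Hence $R[s^{-1}]$ is a direct summand of $R[u^{-1}]$ as an $R$-module, so $\Ext^1_R(R[s^{-1}],M)$ is a direct summand of $\Ext^1_R(R[u^{-1}],M)$, which vanishes by the previous case applied to the regular element $u$. This settles all cases, so $M$ is contraadjusted.

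I expect the only delicate points to be the Goldie-theoretic bookkeeping (that $q_1,\dots,q_k$ are exactly the minimal primes of $R$ and that regularity is detected componentwise in $Q$) and the identification $R[u^{-1}]/tR[u^{-1}]\cong R[s^{-1}]$; the rest is routine. Note that when $R$ is a domain one has $k=1$, there are no nonzero zero-divisors, and only the first, trivial, case is needed.
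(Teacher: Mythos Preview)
Your proof is correct and takes a genuinely different route from the paper's. The paper simply invokes a result from \cite[9.1]{GT} stating that over a semiprime Goldie ring one has $\Ext^1_R(P,D)=0$ for every module $P$ of projective dimension $\leq 1$ and every divisible module $D$, and then applies Lemma~\ref{morep}. This is a one-line citation, but the cited fact is itself nontrivial (it amounts to showing that the cotorsion pair generated by $\{R/rR : r \text{ regular}\}$ has as left class all direct summands of modules filtered by $R$ and these cyclic modules, and that this contains $\mathcal P_1$).

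Your argument, by contrast, is self-contained: the regular case is handled by the obvious recursion, and for a nonzero zero-divisor $s$ you exploit the minimal-prime decomposition of a commutative semiprime Goldie ring to manufacture a regular $u=s+t$ with $st=0$, and then exhibit $R[s^{-1}]$ as a direct summand of $R[u^{-1}]$ via the idempotent splitting $R[u^{-1}]\cong R[u^{-1}]/sR[u^{-1}]\times R[u^{-1}]/tR[u^{-1}]$. The identification $R[u^{-1}]/tR[u^{-1}]\cong R[s^{-1}]$ is correct (inverting $s$ kills $t$ since $st=0$, and $s\equiv u$ modulo $t$, so the universal properties match), and the prime-avoidance step is justified because $\bigcap_{j\in A}q_j\subseteq q_i$ for some $i\notin A$ would force one of the $q_j$ into $q_i$. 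The payoff of the paper's approach is generality: it yields $\Ext^1_R(P,D)=0$ for \emph{all} $P$ of projective dimension $\leq 1$, not just the $R[s^{-1}]$. The payoff of yours is that it avoids importing the machinery behind \cite[9.1]{GT} and makes transparent exactly where the Goldie hypothesis enters---only in the zero-divisor case, to reduce to a regular localisation.
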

\begin{proof}
By \cite[9.1]{GT}, for semiprime Goldie rings, $\Ext_R^1(P, D) = 0$ whenever $P$ has projective dimension $\leq 1$ and $D$ is divisible, so the claim follows from Lemma \ref{morep}.
\end{proof}

If $M$ is a module and $0 \neq s \in R$, we let $\widehat{M}_s$ be the completion of $M$ in the ideal $sR$, i.e.\ the module
\[ \varprojlim\nolimits_{i<\omega} M/s^iM \]
(the maps between the modules being $m + s^{i+1} M \mapsto m + s^i M$). We further denote by $c_s$ the canonical morphism $M \to \widehat M_s$ sending $m$ to $(m + s^iM \mid i<\omega)$. The following lemma shows that the property of being contraadjusted can be translated to some form of completeness:

\begin{lemma}\label{CAcomplete}
Let $R$ be a ring, $M$ a module and $s \in R$. If $\Ext_R^1(R[s^{-1}], M) = 0$, then the canonical homomorphism $c_s$ is surjective. If $M$ has no $s$-torsion (i.e.\ $sm = 0 \Rightarrow m = 0$ for $m \in M$), then the reverse implication holds as well.
\end{lemma}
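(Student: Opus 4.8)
The plan is to reuse the system of linear equations from Lemma \ref{CAsystem} and recognize that solvability of a ``localized'' version of that system is exactly surjectivity of $c_s$. First I would set up notation: given an element $\widehat{m} = (m_i + s^i M \mid i < \omega)$ of $\widehat{M}_s$, the compatibility condition $m_{i+1} + s^i M = m_i + s^i M$ means $m_{i+1} - m_i \in s^i M$, so we may write $m_{i+1} - m_i = s^i y_i$ for suitable $y_i \in M$. Telescoping, $m_i = m_0 + \sum_{j < i} s^j y_j$, so $\widehat{m}$ is the image under $c_s$ of some $m \in M$ iff there is $m \in M$ with $m - m_i \in s^i M$ for all $i$, equivalently $m - m_0 \in \bigcap_i s^i M$ after absorbing the partial sums. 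The cleanest route is to observe directly that $c_s$ is surjective iff for every sequence $(y_i \mid i<\omega)$ in $M$ there exists $x \in M$ with $x - \sum_{j<i} s^j y_j \in s^i M$ for all $i$, i.e.\ $x = \sum_{j<i} s^j y_j + s^i z_i$ for some $z_i \in M$; rearranging, $z_i - s z_{i+1} = -y_i$, which is precisely the system \eqref{system} with right-hand side $(-y_i)$. Hence surjectivity of $c_s$ is equivalent to solvability of \eqref{system} for all right-hand sides of the special form obtained this way — but in fact every right-hand side $(m_i)$ arises (take $y_i$ freely), so the two conditions are literally the same. By Lemma \ref{CAsystem}, $\Ext_R^1(R[s^{-1}],M)=0$ implies \eqref{system} is always solvable, hence $c_s$ is surjective.

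For the converse under the no-$s$-torsion hypothesis: I would again invoke Lemma \ref{CAsystem} and the exact sequence \eqref{present}. Applying $\Hom_R(-,M)$ to $0 \to R^{(\omega)} \overset{g_s}\to R^{(\omega)} \to R[s^{-1}] \to 0$ shows $\Ext_R^1(R[s^{-1}],M) = \operatorname{coker}\Hom_R(g_s,M)$, and $\Hom_R(g_s,M)$ sends $(x_i) \mapsto (x_i - s x_{i+1})$. So it suffices to show every sequence $(m_i)$ lies in the image, i.e.\ the system \eqref{system} is solvable. Given $(m_i)$, form the partial sums and the element $\widehat{m} \in \widehat{M}_s$ whose $i$-th component is $m_0 + s m_1 + \cdots + s^{i-1} m_{i-1}$ modulo $s^i M$ (these are compatible since consecutive partial sums differ by a multiple of $s^i$); pick $x_0 \in M$ with $c_s(x_0) = \widehat{m}$, which exists by surjectivity. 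Then $x_0 - (m_0 + \cdots + s^{i-1}m_{i-1}) \in s^i M$ for every $i$, so we may write $x_0 - m_0 = s x_1$ for some $x_1$, and inductively $x_i - m_i = s x_{i+1}$; the no-$s$-torsion hypothesis guarantees these successive quotients $x_{i+1} = (x_i - m_i)/s$ are well-defined (unique). This yields a solution $(x_i)$ of \eqref{system}, so $\Hom_R(g_s,M)$ is surjective and $\Ext_R^1(R[s^{-1}],M)=0$.

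The main obstacle is bookkeeping: correctly matching the telescoped partial-sum description of elements of $\widehat{M}_s$ with the indexing of the system \eqref{system}, and checking that no-$s$-torsion is exactly what is needed to ``divide by $s$'' unambiguously at each inductive step (without it, the $x_{i+1}$ are only defined up to $s$-torsion and the chain of choices may fail to be coherent). I expect the first implication to be essentially formal once the dictionary ``component of $\widehat{M}_s$ $\leftrightarrow$ right-hand side of \eqref{system}'' is fixed, and the second to hinge entirely on that torsion hypothesis. Everything else is routine manipulation with the free presentation \eqref{present}.
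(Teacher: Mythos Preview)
Your argument is correct and matches the paper's: both directions pass through the partial-sum dictionary between elements of $\widehat M_s$ and right-hand sides of the system \eqref{system}, with the converse using the no-$s$-torsion hypothesis to divide by $s$ at each inductive step exactly as in the paper. One caveat: the ``rearranging'' in your first paragraph (from the existence of $z_i$'s with $x = \sum_{j<i} s^j y_j + s^i z_i$ to a relation $z_i - s z_{i+1} = y_i$) already requires dividing by $s^i$ and hence torsion-freeness, so the two conditions are not ``literally the same'' in general---but since for the forward implication you only need the easy telescoping direction (a solution $(z_i)$ of the system yields the preimage $x = z_0$), the proof goes through; also note the sign should be $+y_i$, not $-y_i$.
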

\begin{proof}
In the proof, we shall view $\widehat M_s$ as a submodule of the product $\prod_{i<\omega} M/s^iM$.

Assume the solvability of \eqref{system} and pick an element $(t_i + s^i M \mid i < \omega)$ in $\widehat M_s$. Put $m_0 = t_1$ and $m_i$ in $M$ such that $s^i m_i = (t_{i+1} - t_i)$ for $i>0$; such $m_i$'s exist because of the definition of inverse limit. Let $x_0, x_1, \dots$ be the solution of the system \eqref{system} with the given right-hand side $m_0, m_1, \dots$. It is now easy to check $x_0 - t_i \in s^iM$ for each $i < \omega$. Hence $x_0$ is the sought preimage of the element of the completion.

To show the converse, assume that $c_s$ is surjective and let $m_0, m_1, \dots$ be a sequence of elements of $M$; we shall check the solvability of the system \eqref{system}. In $\widehat M_s$, consider the element 
\[ \Bigl( \sum\nolimits_{k<i} m_k s^k + s^i M \mathrel{\Big|} i<\omega \Bigr); \]
let $x_0$ be any of its preimages in $c_s$. Now the elements $x_1, x_2, \dots$ can be simply constructed by a recurrence: By the definition of $x_0$, we have $x_0 - m_0 \in sM$, so there is $x_1 \in M$ such that $x_0 - s x_1 = m_0$. Given $x_1$, we observe that
\[ s(x_1 - m_1) = x_0 - m_0 - s m_1 \in s^2 M; \]
since $M$ has no $s$-torsion, we infer that $x_1 - m_1 \in s M$ and proceed as before to find $x_2, x_3, \dots$.
\end{proof}

The kernel of the homomorphism $c_s$ above is the intersection $\bigcap_{i<\omega} s^i M$, which is an $R[s^{-1}]$-module in case $M$ has no $s$-torsion. Thus, roughly said, there are two reasons for contraadjustedness of torsion-free modules: divisibility and completeness.

Our next goal will be to examine the existence of $\CA$-envelopes.

\begin{lemma}\label{CApreenevelope}
Let $M$ be an $R$-module, which is an $R[s^{-1}]$-module for some non-zero $s \in R$. Then there is a $\CA$-preenvelope of $M$ (in the category of $R$-modules), which is an $R[s^{-1}]$-module.
\end{lemma}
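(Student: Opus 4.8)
The statement to prove is Lemma \ref{CApreenevelope}: if $M$ is an $R$-module that happens to carry an $R[s^{-1}]$-module structure, then $M$ has a $\CA$-preenvelope which is again an $R[s^{-1}]$-module. The natural strategy is to produce the preenvelope inside the category of $R[s^{-1}]$-modules and then observe that it already works as an $R$-module preenvelope. First I would recall that over $R[s^{-1}]$ one has the corresponding very flat cotorsion pair, generated by the set $\{R[s^{-1}][r^{-1}] \mid r \in R[s^{-1}]\}$; by Theorem \ref{approx-main}(2) this cotorsion pair is complete, so $M$ (viewed as an $R[s^{-1}]$-module) admits a special $\CA_{R[s^{-1}]}$-preenvelope, i.e.\ a short exact sequence $0 \to M \to C \to V \to 0$ of $R[s^{-1}]$-modules with $C$ contraadjusted as an $R[s^{-1}]$-module and $V$ very flat as an $R[s^{-1}]$-module.

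The key step is then to check two transfer facts. (1) A contraadjusted $R[s^{-1}]$-module $C$ is contraadjusted as an $R$-module: for $r \in R$ one has $R[r^{-1}] \otimes_R R[s^{-1}] \cong R[(rs)^{-1}] = R[s^{-1}][r^{-1}]$ (using the identity $R[r^{-1}]\otimes_R R[s^{-1}] \cong R[(rs)^{-1}]$ noted at the end of Section 1, together with the fact that inverting $r$ in $R[s^{-1}]$ is the same as inverting $rs$), and by the standard base-change (tensor-hom) adjunction $\Ext^1_R(R[r^{-1}], C) \cong \Ext^1_{R[s^{-1}]}(R[r^{-1}]\otimes_R R[s^{-1}], C) = \Ext^1_{R[s^{-1}]}(R[s^{-1}][r^{-1}], C) = 0$. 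The adjunction is legitimate here because $R[s^{-1}]$ is a (flat) localization, so $R[r^{-1}]\otimes_R R[s^{-1}]$ has $R[s^{-1}]$-projective dimension no bigger than the $R$-projective dimension of $R[r^{-1}]$, and in fact one can argue at the level of the explicit presentation \eqref{present}. (2) For any $R$-module $N$ carrying an $R[s^{-1}]$-structure, $\Hom_R(R[r^{-1}], C)$-type maps factor through $R[s^{-1}]$: more precisely, any $R$-homomorphism from an $R[s^{-1}]$-module to $C$ is automatically $R[s^{-1}]$-linear, since $C$ is an $R[s^{-1}]$-module and multiplication by $s$ is already invertible on both sides. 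This last observation is what lets us conclude the $R$-preenvelope property: given any $R$-homomorphism $f' \colon M \to C'$ with $C'$ an arbitrary contraadjusted $R$-module, we must factor $f'$ through $M \to C$. But this factorization has to be found in the category of $R$-modules, where $C'$ need not be an $R[s^{-1}]$-module, so the preceding remark does not immediately apply to $C'$.

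The main obstacle, therefore, is exactly this factorization through an arbitrary contraadjusted $R$-module $C'$. The way around it is to note that $C'$ being contraadjusted means $\Ext^1_R(R[s^{-1}], C') = 0$, so applying $\Hom_R(-,C')$ to the short exact sequence $0 \to M \to C \to V \to 0$ — whose cokernel $V$ is a very flat $R[s^{-1}]$-module, hence in particular a very flat $R$-module (very flatness over $R[s^{-1}]$ implies very flatness over $R$ by the same base-change computation, cf.\ \cite[1.2.2]{P}) — gives an exact sequence $\Hom_R(C, C') \to \Hom_R(M, C') \to \Ext^1_R(V, C')$, and the last term vanishes because $V$ is very flat and $C'$ is contraadjusted. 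Hence every $f' \colon M \to C'$ extends along $M \to C$, which is precisely the $\CA$-preenvelope property. Finally $C$ is an $R[s^{-1}]$-module by construction, completing the proof. I expect the only subtle point to be making the two base-change isomorphisms fully rigorous; everything else is a direct application of Theorem \ref{approx-main}(2) and the definition of a special preenvelope.
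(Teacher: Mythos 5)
Your proof is correct and takes essentially the same route as the paper's. The paper's argument is a terse three-line version: construct a special $\CA$-preenvelope $0 \to M \to C \to V \to 0$ in the category of $R[s^{-1}]$-modules, then invoke \cite[1.2.2]{P} and \cite[1.2.3]{P} to conclude that $C$ is contraadjusted and $V$ is very flat as $R$-modules, so the same short exact sequence is a special $\CA$-preenvelope over $R$. You have simply unwound what those citations and the phrase ``special preenvelope'' encode: the base-change isomorphism $\Ext^1_R(R[r^{-1}],C) \cong \Ext^1_{R[s^{-1}]}(R[s^{-1}][r^{-1}],C)$ (legitimate since $R[s^{-1}]$ is flat over $R$), and the factorization of any $f' \colon M \to C'$ through $C$ via the vanishing of $\Ext^1_R(V,C')$ for $V$ very flat, $C'$ contraadjusted. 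The middle detour in your write-up about $R$-homomorphisms into $C$ being automatically $R[s^{-1}]$-linear is a red herring that you yourself correctly discard, and the minor citation slip ([P, 1.2.2] rather than [P, 1.2.3] for the very-flatness transfer) is immaterial since you supply the computation directly.
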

\begin{proof}
It suffices to construct a special $\CA$-preenvelope
\[ 0 \to M \to C \to V \to 0 \]
in the category of $R[s^{-1}]$-modules. By \cite[1.2.2]{P}, $C$ is a contraadjusted $R$-module. Likewise, $V$ is a very flat $R$-module because of \cite[1.2.3]{P}.
\end{proof}

\begin{lemma}\label{CAnotenv}
Let $R$ be a noetherian ring with infinite spectrum. Then the class $\CA$ is not enveloping.
\end{lemma}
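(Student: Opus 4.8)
The strategy is to mirror the argument for Lemma \ref{VFcoverlemma}, exploiting the duality between covers by $\VF$ and envelopes by $\CA$ via Salce's duality, together with the explicit form of $\CA$-preenvelopes supplied by Lemma \ref{CApreenevelope}. First I would pick $q_0$, $Q_1$, $W$ as in Lemma \ref{domainlike}, set $B = R_{q_0}$, and suppose towards a contradiction that $B$ has a $\CA$-envelope $f\colon B \to C$. Since $B$ is an $R[s^{-1}]$-module for every $s \in R\setminus q_0$, Lemma \ref{CApreenevelope} produces, for each such $s$, a $\CA$-preenvelope $g_s\colon B \to C_s$ which is again an $R[s^{-1}]$-module. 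By the preenveloping property of $C_s$ and the (co)enveloping uniqueness of $f$, the composite $C \to C_s \to C$ arising from factoring $f$ through $g_s$ and $g_s$ through $f$ is an automorphism of $C$; hence $f$ itself factors through $g_s$, i.e. the map $C \to C_s$ induced on the envelope side is a split monomorphism (dually to how $l_s$ was shown to be a split inclusion in Lemma \ref{VFcoverlemma}).

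The point of this splitting is that $C$ becomes, for every $s \notin q_0$, a direct summand of an $R[s^{-1}]$-module; equivalently $C \cong C \otimes_R R[s^{-1}]$ once one checks (as in Lemma \ref{VFcoverlemma}, using that the relevant comparison map has essential image, or directly that $C$ is already an $R[s^{-1}]$-module once $f$ is an isomorphism after localization) that $C$ is an $R[s^{-1}]$-module. Passing to the direct limit over the multiplicative monoid generated by $R\setminus q_0$ — whose localization is exactly $R_{q_0} = B$ — we conclude that $C$ is a $B = R_{q_0}$-module, and in particular $C \cong C\otimes_R B$. Then $\PSupp C \subseteq \PSupp B = \{q_0\}$, which by Lemma \ref{infclosure} is not open and contains no non-empty open set. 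On the other hand, $C$, being a $\CA$-preenvelope-cokernel-free piece — more precisely, the cokernel $V$ in the special $\CA$-preenvelope $0 \to B \to C_s \to V \to 0$, and hence $C_s$ itself when $B$ is replaced appropriately — forces $C$ to be very flat: here I would argue that in the special $\CA$-preenvelope $0\to C \to C' \to V' \to 0$ given by completeness of the cotorsion pair $(\VF,\CA)$ applied to... no: rather I exploit that $C$ itself, being contraadjusted and an $R_{q_0}$-module with trivial P-support behaviour, leads to a contradiction with the openness half of Lemma \ref{psupp} applied to a very flat module constructed alongside $C$.

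Let me instead state the cleaner route I actually expect to use: from $C \cong C\otimes_R B$ one gets that the very flat module $V$ appearing in \emph{any} short exact sequence $0 \to C \to W \to V \to 0$ with $W\in\CA$... is not the right handle either. The genuinely clean contradiction is this: $f\colon B\to C$ is injective (as $B$ is torsion-free and $\Ext$-orthogonality forces the envelope to be monic here, or one replaces $C$ by $C/\operatorname{im} f$'s complement), so $V := \operatorname{coker} f$ sits in $0 \to B \to C \to V \to 0$ with $C$ contraadjusted; by Salce and the fact that $(\VF,\CA)$ is a cotorsion pair, this is a special $\CA$-preenvelope, so $V$ is very flat — but I have shown $C \cong C\otimes_R B$, hence $V \cong V\otimes_R B$ as well (tensoring the sequence with $B$, using $B\otimes_R B \cong B$), so $\PSupp V \subseteq \PSupp B = \{q_0\}$, which is not open, contradicting Lemma \ref{psupp}. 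The main obstacle I anticipate is the bookkeeping around the direct-limit step: verifying carefully that the split monomorphisms $C \to C_s$ are compatible across divisibility $s \mid s'$ so that the colimit over $R\setminus q_0$ really computes $C\otimes_R R_{q_0}$, and checking that the envelope $f$ may be taken injective (or handling the non-injective case separately). The rest is a direct transcription of the P-support obstruction already used twice in the paper.
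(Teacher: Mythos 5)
Your plan correctly mirrors the mechanics of Lemma \ref{VFcoverlemma} — using Lemma \ref{CApreenevelope} to force the (co)envelope to become a module over all the localizations $R[s^{-1}]$, then invoking the P-support obstruction. However, the choice of test module $B = R_{q_0}$ is fatal to the whole approach: $R_{q_0}$ is already a contraadjusted $R$-module, so the $\CA$-envelope is the identity and $V = 0$, and Lemma \ref{psupp} gives no contradiction whatsoever.

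To see that $R_{q_0}$ is contraadjusted, note that $q_0$ is a minimal prime, so $R_{q_0}$ is a zero-dimensional noetherian local ring, hence artinian with nilpotent maximal ideal $q_0R_{q_0}$. Now take any $s \in R$ and use the criterion of Lemma \ref{CAsystem}. If $s \notin q_0$, then $s$ is invertible on $R_{q_0}$ and the system \eqref{system} is solved by $x_0 = 0$, $x_{i+1} = s^{-1}(x_i - m_i)$. If $s \in q_0$, then $s$ is nilpotent on $R_{q_0}$, say $s^k R_{q_0} = 0$, and the finite sums $x_i = \sum_{j < k} s^j m_{i+j}$ solve the system. So $\Ext^1_R(R[s^{-1}],R_{q_0}) = 0$ for all $s$. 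Your own argument then only yields $V = 0$, i.e.\ $B \in \CA$, which is true and uninformative. Note also that your $\PSupp$ argument never could rule out $V = 0$: the empty set is open, and Lemma \ref{psupp} only says the P-support of a \emph{non-zero} very flat module is non-empty, so the step ``$\PSupp V \subseteq \{q_0\}$, contradiction'' silently assumes $V \neq 0$, which is exactly what fails here.

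The paper avoids this trap by choosing a richer test module: it picks two distinct $p_1, p_2 \in Q_1$ and sets $N = S^{-1}R$ with $S = R \setminus (p_1 \cup p_2)$, so $N$ as a ring has two maximal ideals. The argument then genuinely splits into two cases. If $V = C/N \neq 0$, the P-support obstruction works as in your sketch. If $V = 0$, so that $N$ itself is contraadjusted, the paper passes to the quotient $M = N/(S^{-1}q_0)$ (also contraadjusted, since $R[t^{-1}]$ has projective dimension $\leq 1$), observes $M$ is a noetherian domain with two maximal ideals, and uses the Krull intersection theorem together with Lemma \ref{CAcomplete} to show $M \cong \widehat M_t$ for $t \in p_1 \setminus p_2$; but then $t$ lies in the Jacobson radical of $M$, hence in $p_2$, a contradiction. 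Your proposal has no analogue of this second case, and with your choice of $B$ the second case is the one that actually occurs.
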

\begin{proof}
Let $q_0$, $Q_1$ be as in Lemma \ref{domainlike} and pick $p_1, p_2 \in Q_1$ distinct. Put $N = S^{-1}R$, where $S = R \setminus (p_1 \cup p_2)$. Clearly, $N$ is a module over $R[s^{-1}]$ for each $s \in S$, so by Lemma \ref{CApreenevelope}, it has a $\CA$-preenvelope which is a module over $R[s^{-1}]$. If the $\CA$-envelope exists, it is a direct summand in each such preenvelope, hence an $N$-module.

Assume that $C$ is the $\CA$-envelope of $N$. By Wakamatsu lemma \cite[5.13]{GT}, $V = C/N$ is very flat; however, as a factor of $N$-modules, it is an $N$-module. Then, however, $V \cong V \otimes_R N$, so unless $V = 0$, we have $\PSupp V \subseteq \PSupp N$ and the latter set has empty interior in view of Lemma \ref{infclosure}, thus $V$ would not be very flat because of Lemma \ref{psupp}.

If $V = 0$, then $N \cong C$ is contraadjusted. Pick $t \in p_1 \setminus p_2$ and put $M = N/(S^{-1}q_0)$. Then $M$ as a factor of a contraadjusted module is contraadjusted. On the other hand, since $M$ as a ring is a noetherian domain, by the Krull intersection theorem, $\bigcap_{k<\omega} t^k M = 0$. Therefore if $\Ext_R^1(R[t^{-1}], M) = 0$, $M \cong \widehat M_t$. However, for each $r \in M$, the (image of the) element $t$ has an inverse in $\widehat M_{t}$ (namely $1 + rt + r^2t^2 + \cdots$), thus $t$ is in the Jacobson radical of $M$, and consequently in $p_2$, a contradiction.
\end{proof}

\begin{remark}
An analogous technique, i.e.\ constructing special precovers in the categories of $R[s^{-1}]$-modules, can be used to prove Lemma \ref{VFcoverlemma}.
\end{remark}

\begin{corollary}\label{caenveloping} Let $R$ be a noetherian domain. Then the condition that the class $\CA$ is enveloping is equivalent to all the other conditions of Theorem \ref{VFcoverdomain}.
\end{corollary}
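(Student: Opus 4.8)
The plan is to prove the single equivalence ``$\CA$ is enveloping $\iff$ condition (ii) of Theorem~\ref{VFcoverdomain}''; since (i), (ii), (iii) there are already known to be equivalent, this suffices. Two preliminary remarks make both directions short. First, $\CA = \VF^\perp$ by Definition~\ref{veryf}. Second, since $R$ is a domain we have $P_0 = \{0\}$, so by Lemma~\ref{finitespectrum} the finiteness of $\Spec R$ is, for us, the same as condition (ii) (finite spectrum \emph{and} $\Dim R \le 1$); in particular the negation of (ii) is simply that $\Spec R$ is infinite.

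Assume first that (ii) holds. By the equivalence of (ii) and (iii) in Lemma~\ref{gdomain}, we then have $\VF = \mathcal F$, the class of all flat modules, and hence $\CA = \VF^\perp = \mathcal F^\perp$ is exactly the class of all cotorsion modules. Now $(\mathcal F, \CA)$ is the flat cotorsion pair; it is complete and $\mathcal F$ is closed under direct limits, so every module has a $\CA$-envelope (equivalently, cotorsion envelopes exist over an arbitrary ring; see e.g.\ \cite{GT} or \cite{EJ}). Hence $\CA$ is enveloping.

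Conversely, suppose (ii) fails. By the second remark above, $\Spec R$ is then infinite, and Lemma~\ref{CAnotenv} shows that $\CA$ is not enveloping. This yields the desired equivalence. I do not expect a genuine obstacle here: the corollary merely combines Lemma~\ref{CAnotenv} with the identification, under hypothesis (ii), of $\CA$ with the class of cotorsion modules, together with the classical fact that cotorsion envelopes always exist. The only point worth a moment's care is that, over a noetherian domain, ``$\Spec R$ finite'' already forces $\Dim R \le 1$, so it is genuinely equivalent to (ii) rather than merely implied by it; this is precisely Lemma~\ref{finitespectrum}.
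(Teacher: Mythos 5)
Your proof is correct and follows essentially the same route as the paper: Lemma~\ref{CAnotenv} handles the case of infinite spectrum, while under condition (ii) the identification $\VF=\mathcal F$ (so $\CA=\mathcal F^\perp$ is the class of cotorsion modules) combined with the fact that cotorsion envelopes always exist gives the converse. The only cosmetic slip is the cross-reference: in Lemma~\ref{gdomain} the relevant equivalence is between items (i) and (iii), not (ii) and (iii), but this does not affect the argument.
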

\begin{proof}
This is just a direct application of Lemma \ref{CAnotenv} and the fact that the class of all cotorsion modules is always enveloping.
\end{proof}

{\bf Acknowledgement:} We would like to thank Leonid Positselski and Roger Wiegand for valuable comments and discussions. We are also very grateful to the referee for a number of comments and suggestions that helped to improve the paper.

\end{document}